\renewcommand*{\backref}[1]{}
\renewcommand*{\backrefalt}[4]{({%
		\ifcase #1 Not cited.%
		\or On p.~#2%
		\else On pp.~#2%
		\fi%
	})}
\crefname{subsection}{Subsection}{Subsection}
\DeclareMathAlphabet{\mathbbe}{U}{bbold}{m}{n}
\def\DDelta{{\mathbbe{\Delta}}}
\newcommand{\DD}{\DDelta}
\newcommand{\C}{\mathscr{C}}
\newcommand{\cC}{\mathcal{C}}
\newcommand{\D}{\mathscr{D}}
\newcommand{\cF}{\mathcal{F}}
\newcommand{\cG}{\mathcal{G}}
\newcommand{\M}{\mathscr{M}}
\newcommand{\cM}{\mathcal{M}}
\newcommand{\cN}{\mathcal{N}}
\newcommand{\bN}{\mathbb{N}}
\newcommand{\cR}{\mathcal{R}}
\newcommand{\U}{\mathscr{U}}
\newcommand{\V}{\mathscr{V}}
\newcommand{\cW}{\mathcal{W}}
\newcommand{\set}{\mathscr{S}\mathrm{et}}
\newcommand{\sset}{s\set}
\newcommand{\cat}{\mathscr{C}\mathrm{at}}
\newcommand{\Subfib}{\mathscr{S}\mathrm{ub}^{\mathrm{fib}}}
\newcommand{\Fun}{\mathrm{Fun}}
\newcommand{\Map}{\mathrm{Map}}
\newcommand{\Hom}{\mathrm{Hom}}
\newcommand{\Ho}{\mathrm{Ho}}
\newcommand{\id}{\mathrm{id}}
\newcommand{\Shv}{\mathscr{S}\mathrm{hv}}
\newcommand{\colim}{\mathrm{colim}}
\newcommand{\comp}{\mathrm{comp}}
\newcommand{\Grp}{\mathscr{G}\mathrm{rp}}
\newcommand{\Ab}{\mathscr{A}\mathrm{b}}
\newcommand{\Top}{\mathscr{T}\mathrm{op}}
\newcommand{\ev}{\mathrm{ev}}
\newcommand{\Fil}{\mathscr{F}\mathrm{il}}
\newtheorem{theorem}[equation]{Theorem}
\newtheorem{lemma}[equation]{Lemma}
\newtheorem{proposition}[equation]{Proposition}
\newtheorem{corollary}[equation]{Corollary}
\theoremstyle{definition}
\newtheorem{definition}[equation]{Definition}
\newtheorem{example}[equation]{Example}
\theoremstyle{remark}
\newtheorem{remark}[equation]{Remark}
\newtheorem{notation}[equation]{Notation}
\newtheorem{warning}[equation]{Warning}
\numberwithin{equation}{section}
\title{Filter Quotient Model Structures}
\date{March 2026}
\author{Nima Rasekh}
\address{Institut f{\"u}r Mathematik und Informatik, Universit{\"a}t Greifswald, Greifswald, Germany}
\email{nima.rasekh@uni-greifswald.de}
\subjclass[2020]{18N40, 18N60, 18B25, 18C50}
\keywords{Model category theory, filter quotient construction, $\infty$-categories}
\begin{document}

\begin{abstract}
	The filter quotient construction is a particular instance of a filtered colimit of categories. It has primarily been considered in the context of categorical logic, where it has been used effectively to construct non-trivial models, for example new models of set theory.

	In this work we prove that given a model category and a suitable notion of filter of subterminal objects, the filter quotient construction will preserve the model structure. We also show that this new model structure inherits certain important properties (such as being simplicial or proper), but not all (such as being cofibrantly generated). Finally, we show it is compatible with the construction of filter quotient $\infty$-categories \cite{rasekh2021filterquotient}.
\end{abstract}

\maketitle

\section{Introduction}

\subsection{Overview: Model Categories and Type Theories}
This paper introduces a new class of model categories, \emph{filter quotient model categories}, 	whose construction does not require many common technical assumptions, such as local presentability of the underlying category or cofibrant generation, and has concrete applications in type theory. In an effort to make the text accessible to a broad range of readers with a variety of backgrounds, the introduction aims to provide relevant context for the important ideas. This includes an overview of model category theory (\cref{subsec:modelcats,subsec:newmodelcats,subsec:modelcatspresentable}) and an overview of type theories and their models (\cref{subsec:typetheory,subsec:modelcatstypetheories}), culminating in a review of the main result (\cref{subsec:filterquotientmodelstructures}). 

\subsection{Model Categories in Homotopy Theory} \label{subsec:modelcats}
Homotopy theory arises whenever we study mathematical objects that exhibit a non-trivial notion of equivalence, or ``sameness''. This first manifested in the study of topological spaces, with the notion of (weak) homotopy equivalence of topological spaces, which in general is weaker than a homeomorphism. From there it generalized to quasi-isomorphisms of chain complexes, and then to many other mathematical objects in algebra and geometry.

Applying a mathematical lens to this situation suggests the need for a formal structure that can capture such mathematical objects along with their equivalences. This was first achieved by Quillen. In \cite{quillen1967modelcats}, he defined \emph{model categories} as an abstract category along with a chosen class of morphisms that are ``weak equivalences'', along with further data that permits an effective handling of this data.

Since then, many mathematicians have used the model categorical framework to study equivalences of interest. Examples include the study of stable homotopy theory and spectra \cite{ekmm1995stable}, the study of motivic homotopy theory \cite{voevodsky1998motivicorigin}, the study of $\infty$-category theory \cite{rezk2001css}, and the study of type theories \cite{kapulkinlumsdaine2021kanunivalent}. In all of these cases the structure of a model category has non-trivially contributed towards a better understanding of the mathematical objects and the equivalences in question.

\subsection{New Model Categories (from	Old)} \label{subsec:newmodelcats}
Unfortunately, the model categorical framework comes at a price. A model category entails substantial data and numerous conditions, making it difficult to construct new model structures just out of a category and a desired class of weak equivalences. This has resulted in a variety of methods that permit obtaining model structures from a specified collection of data.

\begin{enumerate}[leftmargin=*]
	\item \textbf{Cisinski model structures:} Given a suitable collection of monomorphisms, denoted $\mathrm{An}$, in a presheaf category $\Fun(\C^{op},\set)$, there is a model structure on $\Fun(\C^{op},\set)$, called the \emph{Cisinski model structure}, whose cofibrations are monomorphisms and fibrant objects satisfy the right lifting property with respect to $\mathrm{An}$ \cite{cisinski2006cisinskimodelstructure}. More generally, we have analogous results for all Grothendieck topoi \cite{cisinski2002homotopytopos}.
	\item \textbf{Induced (transferred) model categories:} Given a suitable model category and a left (or right) adjoint functor $F\colon \cM \to \C$, if $\cM, F, \C$ satisfy certain conditions, we can construct a model structure on $\C$, that is ``induced'' or ``transferred" from $\cM$, meaning that the weak equivalences and cofibrations (or fibrations) of $\C$ are determined via the functor $F$. Induced model structures have been used for decades \cite{crans1995quillen,goerssschemmerhorn2007model}, however, recently we have seen various refinements generalizing previous results \cite{bayehhesskarpovakedziorekriehlshipley2015leftinduced,hesskedziorekriehlshipley2017inducedmodel,garnerkedziorekriehl2020lifting}.
	\item \textbf{Functor model categories:} Given a suitable model category $\cM$ and an arbitrary category $\C$, we can use a variation of inducing model structures to construct two model structures on the functor category $\Fun(\C,\cM)$, known as the \emph{projective model structure} and the \emph{injective model structure} \cite{heller1982functorcategory}. In recent years we have seen generalizations of these methods to a variety of settings, such as enriched model structures \cite{moser2019injective}.
	\item \textbf{Bousfield localizations:} Given a suitable model category $\cM$ and a choice of cofibrations (or fibrations) one can define a \emph{left Bousfield localization} (or \emph{right Bousfield localization}) on the same category \cite{bousfield1975localization}. This approach has since been generalized to the enriched case \cite{barwick2010bousfieldlocalization}, and the monoidal case \cite{white2022bousfieldlocalizations}.
	\item \textbf{Reedy model categories:} Given a general model category $\cM$ and a \emph{Reedy category} $\cR$, we can construct the \emph{Reedy model structure} on the functor category $\Fun(\cR,\cM)$ \cite{reedy1974modelstructure}. This approach has since been generalized to the enriched setting \cite{angeltveit2008enrichedreedy,ghazelkadhi2019reedy}, and also to more general diagram categories, known as \emph{generalized Reedy categories} \cite{cisinski2006cisinskimodelstructure,bergermoerdijk2011extension}.
	\item \textbf{Finite Posets:} Given a finite poset, such as $[n]$, we can use methods from \emph{homotopical combinatorics} and counting methods from the theory of operads, to explicitly characterize and obtain model structures \cite{drozzakharevich2015simplehomotopy,balchinormsbyosornoroitzheim2023modelstructures}.
\end{enumerate}

All of these constructions fall into two broad classes. In Item (1) - (4) we have a large category with a manageable collection of data, such as a set of generating monomorphisms, or a set of generators (local presentability), or a set of generating (trivial) cofibrations (cofibrant generation), which permit reducing arguments to these generators. In Item (5) - (6) we instead have finite inductive conditions, such as a Reedy category or a finite poset, which permit constructing the model structure in a step-by-step or combinatorial manner.

This leaves us with the cases where we have a category that is simultaneously too large to satisfy any finiteness condition, but is also not ``small generated" in the sense that it satisfies accessibility, local presentability or cofibrant generation. As of now the construction of model structures in those cases has largely remained unexplored. The aim of this paper is to present one such method.

\subsection{From Model Categories to \texorpdfstring{$\infty$}{oo}-Categories and back} \label{subsec:modelcatspresentable}
Before proceeding to a new method to construct model structures, it is instructive to first motivate this endeavor. \emph{$\infty$-category theory} arose as another method to study homotopical mathematics. This has been made precise via \emph{models of $\infty$-categories}, all of which are equivalent in a suitable way and generalize classical categories. This includes categorical structures with a weak notion of composition, such as \emph{quasi-categories} \cite{joyal2008notes}, as well as categories with an abstract notion of weak equivalences, such as \emph{relative categories} \cite{barwickkan2012relativecategory}. In recent decades we have seen significant effort developing category theory in the context of $\infty$-categories as a suitable setting for homotopy theory, paralleling and often surpassing analogous developments in model category theory \cite{lurie2009htt,riehlverity2022elements}.

Model categories and $\infty$-categories are deeply intertwined. Indeed, every model category comes with an \emph{underlying $\infty$-category}. However, not every $\infty$-category can be recovered as the underlying $\infty$-category of a model category, as any such $\infty$-category would necessarily have some (co)limits. However, certain applications of $\infty$-category theory significantly benefit from working with $\infty$-categories that come from model categories. Thus, it is natural to wonder which $\infty$-categories can be obtained this way. 

One major step in this direction was taken by Dugger, who proved that every \emph{presentable $\infty$-category} can be obtained as the underlying $\infty$-category of a \emph{combinatorial model category}, i.e.,~a cofibrantly generated model category whose underlying category is locally presentable \cite{dugger2001universal}. The construction of such model categories precisely relies on the techniques outlined in \cref{subsec:newmodelcats}, that are designed for categories with ``small generating data'', which is an essential aspect of both presentable $\infty$-categories and combinatorial model categories. 

If a given $\infty$-category does not exhibit such properties (e.g.,~does not have small colimits or is not accessible), then the aforementioned techniques cannot be used in an analogous manner and we need new methods to construct model structures. For example, \emph{filter quotient $\infty$-categories} \cite{rasekh2021filterquotient} are usually not presentable and even lack countable colimits, yet would benefit from a model categorical presentation, due to their anticipated connection with models of type theories, which we now explore in more detail.

\subsection{Enter Type Theory} \label{subsec:typetheory}
Before we can understand models of type theories and their connection to model categories, we first briefly review type theory itself. For a more detailed introduction see \cite{hottbook2013,riehl2024inftytopos,rijke2025introductionhott}. \emph{Type theory} is a foundation for mathematics consisting of foundational objects called \emph{types} $X$, which are analogous to sets. The elements therein are called \emph{terms} $x\colon X$ and are analogous to elements of a set. Beyond that, there are a variety of type constructors	that permit constructing new types from existing ones. These include type constructors that have analogues in set theory, such as \emph{product types} $X \times Y$, which is analogous to the Cartesian product, or \emph{function types} $X \to Y$, which is analogous to the set of functions from $X$ to $Y$. It also includes certain type constructors that have no analogues in set theory and are used for logical reasoning in type theory, such as the \emph{identity type} $ x =_X y$, for two terms $x,y \colon X$. Unlike in set theory, where sets and formulas about sets live in two different worlds, these \emph{logical type constructors} allow constructing types that are themselves statements about types. A \emph{proof}, for example that two terms $x,y:X$ are equal, then is a term of such a type i.e.,~a term $p\colon x =_X y$. Such proofs must be obtained following the various syntactic rules that govern the behavior of these types. Thus proofs in type theory have a \emph{syntactic} nature, in that proofs are obtained via formal rules that govern the behavior of the types. 

This syntactic nature of type theory makes it particularly suitable for \emph{formalization of mathematics}. Formalization is a process in which mathematical statements are entered into a computer via suitable programming languages, such as Lean or Rocq, and formally verified using the rules of the programming language. Due to their common syntactic nature, many of these programming languages are based on type theoretical foundations \cite{coquandhuet1986coc}. As a result a proof via a proof assistant, such as Lean, really corresponds to a formal proof in the underlying type theory. This is complicated by the fact that different proof assistants rely on different type theories, meaning the same syntactic proof in different proof assistants might correspond to different mathematical statements.

In recent years, and particularly with advances in AI, proof assistants have become increasingly popular, resulting in many successful and ongoing formalization projects \cite{favoniafinsterlicatalumsdaine2016blakersmassey,ljungstrommortberg2023pi4s3,vandoornmassotnash2023hprinciple,kudasovriehlweinberger2024yoneda}. This, however, poses its own challenge, how can we be certain that the formalized result coincides with the intended mathematical statement? Here the solution is to relate type theory to more traditional mathematical foundations, such as set theory, via \emph{models of type theory}.

\subsection{From Type Theories to Model Categories} \label{subsec:modelcatstypetheories}
For a given type theory, a \emph{model} in a given foundation is a precise interpretation of the types, terms and constructors	of the type theory in the given foundation. For example, exhibiting sets as a model of type theory, means interpreting types as sets, terms as elements of sets, product types as Cartesian products of sets, function types as sets of functions, and identity types as equality of elements. Constructing such a model for a type theory is known as a	\emph{soundness} result, as it guarantees that all type theoretical statements can be interpreted meaningfully in some foundation. This has resulted in significant efforts establishing various soundness results for type theories \cite{seely1983hyperdoctrines,seely1984locallycartesian,lambekscott1988higherorderlogic,hofmann1995lccc,curiengarnerhofmann2014categorical,clairambaultdybjer2014biequivalence}.

This development in particular includes models for \emph{homotopy type theory}, which is a type theory with a more intricate notion of identity, that is particularly suitable for the formalization of homotopical statements \cite{hottbook2013}. There was an early realization that if an $\infty$-category can be realized by a suitable model category, then it provides a model for homotopy type theory \cite{kapulkinlumsdaine2021kanunivalent}\footnote{It is an unfortunate historical accident that the two occurrences of the word ``model'' in ``... model category, then it provides a model for homotopy type theory.'' are unrelated.}, hence giving us soundness results in this setting. This has resulted in a series of papers constructing more and more intricate models for homotopy type theory  via model categories \cite{arndtkapulkin2011modelstypetheory,shulman2015homotopycanonicity,lumsdaineshulman2020goodexcellent,shulman2019inftytoposunivalent}. 

While this development has been a tremendous success, all models of homotopy type theory constructed this way have one thing in common: they are combinatorial\footnote{In fact a \emph{type-theoretic model topos}, which is a model categorical model of homotopy type theory due to Shulman includes being combinatorial as part of its definition \cite{shulman2019inftytoposunivalent}.}. However, ample evidence from other type theories suggests that homotopy type theory should indeed exhibit additional models, in particular those coming from filter quotients \cite{adelmanjohnstone1982serreclasses}. We hence need new methods to construct model structures without requiring any ``small generation'' assumption, and that in particular exhibit filter quotient $\infty$-categories as an underlying model category.

\subsection{Filter Quotient Model Structures} \label{subsec:filterquotientmodelstructures}
In the previous subsection, we discussed how established methods for constructing model structures commonly rely on finiteness assumptions or some ``small generation'' assumptions (\cref{subsec:newmodelcats}). However, motivated by the study of type theories and their models, we are interested in constructing model categories in a context where these assumptions do not hold (\cref{subsec:modelcatstypetheories}). The aim of this paper is to introduce a novel technique for constructing new model structures from a given model structure that uses the \emph{filter quotient} construction. At this point we review filter quotients in a more detailed manner.

Given a category $\C$ with finite products and a suitable set of subterminal objects\footnote{An object $C$ is subterminal if the unique morphism to the terminal object is a mono.} $\Phi$, we can construct a new category $\C_\Phi$, known as the \emph{filter quotient category}, which has the same objects as $\C$, and where morphisms are given by equivalence classes of morphisms in $\C$, via an equivalence relation generated by $\Phi$. By construction it comes with a projection functor $P_\Phi\colon\C \to \C_\Phi$, which maps a morphism to its equivalence class. See \cref{subsec:filter quotient} for a detailed review.  

In this paper we show that if the original category is a model category $\cM$, then under suitable conditions on $\cM$ and $\Phi$, which do not include any of the ``small generation" assumptions, the filter quotient category $\cM_\Phi$ also admits a model structure (\cref{thm:filter quotient model structure}). Moreover, the projection functor $P_\Phi$ preserves many important model categorical properties. This includes weak equivalences, (co)fibrations, finite (co)limits (\cref{subsec:limits}), Cartesian closure (\cref{subsec:cartesian closure}), properness (\cref{subsec:properness}), and being simplicial or enriched (\cref{subsec:enriched model structures}). 

Having constructed filter quotient model categories, we also show compatibility by proving that the underlying $\infty$-category of the filter quotient model category recovers the already-mentioned \emph{filter quotient $\infty$-category} \cite{rasekh2021filterquotient} (\cref{sec:underlying infinity categories}). We culminate the paper with a careful analysis of examples, both classes of examples, namely \emph{filter products} (\cref{sec:filter products}), as well as specific examples (\cref{sec:examples}).

In light of the previous subsection, having constructed filter quotient model categories, we might wonder if they indeed provide us with new type-theoretical models. In two follow-up papers, we prove that the filter quotient construction can be used to construct new and non-trivial models of various homotopy type theories of interest \cite{rasekh2025filterhott,rasekh2025shott}. 

\subsection{Acknowledgment}
I would like to thank Gabin Kolly and Kathryn Hess for many helpful discussions in the context of the Bachelor project of Gabin Kolly at EPFL. I would also like to thank Emily Riehl for many useful discussions. Moreover, I am also grateful to the Max Planck Institute for Mathematics in Bonn for its hospitality and financial support. I am also grateful to the Hausdorff Research Institute for Mathematics in Bonn, Germany, for organizing the trimester ``Prospects of Formal Mathematics,'' funded by the Deutsche Forschungsgemeinschaft (DFG, German Research Foundation) under Germany's Excellence Strategy – EXC-2047/1 – 390685813, which resulted in many fruitful interactions and relevant conversations regarding foundation and type theory. Additionally, I am grateful to George Raptis for helpful comments and, in particular, suggesting \cite{raptisrosicky2018smallpresentations} as a relevant source. Finally, I would like to thank the anonymous referees for their careful reading and many helpful comments that improved the presentation and in particular the introduction of this work.

\section{Reviewing Model Categories and Filter Quotients}
This section serves as a review of relevant model categorical and filter quotient concepts. While the review of filter quotients follows standard references (\cref{subsec:filter quotient}), the review of model categories primarily follows the original source due to Quillen \cite{quillen1967modelcats}, and hence diverges from more modern approaches.

\subsection{Model Categories {\`a} la Quillen} \label{subsec:model categories}
Model category theory has been studied very extensively in the past decades, resulting in many detailed references \cite{quillen1967modelcats,hovey1999modelcategories,hirschhorn2003modelcategories,balchin2021modelcats}. However, various definitions of model categories presented in these citations have subtle differences, which will be of utmost importance to us. In this section we will hence explicate the relevant definitions that exhibit such nuances.

\begin{definition}
	Let $\cM$ be a category. A \emph{model structure} on $\cM$ is a triple $(\cF,\cC,\cW)$, called the \emph{fibrations}, \emph{cofibrations} and \emph{weak equivalences} (where $\cF \cap \cW$ are named trivial fibrations and $\cC \cap \cW$ are named trivial cofibrations), respectively, such that the following conditions hold:
	\begin{enumerate}[leftmargin=*]
		\item \textbf{Lifting:} Any commutative square with the left hand map a cofibration, and right hand map fibration, at least one is also a weak equivalence, admits a lift.
		\item \textbf{Factorization:} Every morphism admits two factorizations: a cofibration followed by a trivial fibration and a trivial cofibration followed by a fibration.
		\item \textbf{Retract:} The three classes are closed under retracts.
		\item \textbf{$2$-out-of-$3$:} For two composable morphisms $f,g$, if $2$ out of $f,g,g \circ f$ are in $\cW$ then so is the third.
	\end{enumerate}
\end{definition}

\begin{definition}
	A \emph{model category} is a category $\cM$ with finite limits and colimits and a model structure $(\cF,\cC,\cW)$.
\end{definition}

\begin{warning}
	This definition coincides with \cite{quillen1967modelcats}, but differs from \cite{hovey1999modelcategories,hirschhorn2003modelcategories,balchin2021modelcats} and many other modern sources, where model categories are assumed to be (co)complete.
\end{warning}

Despite the lack of (co)completeness, we can still recover many common model categorical definitions and constructions. For example, we do have an initial object $\emptyset$ and a terminal object $1$, which permits defining \emph{cofibrant} and \emph{fibrant} objects as those for which the unique morphism $\emptyset \to X$ is a cofibration, and $X \to 1$ is a fibration, respectively. Moreover, for an arbitrary object $X$, we can still construct a \emph{cofibrant replacement} $\emptyset \to \tilde{X} \to X$ as a trivial fibration out of a cofibrant object, and a \emph{fibrant replacement} $X \to \hat{X} \to 1$ as a trivial cofibration into a fibrant object, using the factorization axiom. 

Many other common model categorical constructions (properness, Cartesian closure) still coincide as well, and will be reviewed when appropriate. One major difference in the definition of model categories {\`a} la Quillen manifests in the context of enriched (and particularly simplicial) model categories, which we hence review here.

In this next definition $\V^{cmpt}$ denotes the full subcategory of compact objects in $\V$, where a \emph{compact object} is an object $K$ such that the functor $\Hom_\V(K,-)$ commutes with filtered colimits.
\begin{definition} \label{def:enriched	model structure}
	Let $(\V,I_\V,\otimes_\V)$ be a monoidal model category and $\cM$ a model category. A \emph{compactly enriched structure} on $\cM$ consists of the following data:
	\begin{enumerate}[leftmargin=*]
		\item $\cM$ is enriched over $\V$, meaning for two objects $X, Y$, we have an object in $\V$, denoted $\Map_\cM(X,Y)$, called the \emph{mapping object}, such that 
		\[\Hom_\V(I_\V,\Map_{\cM}(X,Y)) = \Hom_{\cM}(X,Y)\]
		along with unital and associative composition maps 
		\[\Map_{\cM}(X,Y) \otimes_\V \Map_{\cM}(Y,Z) \to \Map_{\cM}(X,Z) \] 
		in $\V$, for three objects $X,Y,Z$ in $\cM$.  
		\item Two functors:
		\begin{itemize}[leftmargin=*]
			\item \emph{Tensor:} $ - \otimes - \colon \cM \times \V^{cmpt} \to \cM$ 
			\item \emph{Cotensor:} $(-)^{(-)}\colon (\V^{cmpt})^{op} \times \cM \to \cM$ 
		\end{itemize}
		along with three isomorphisms, natural in $X,Y$ in $\cM$ and $K$	in $\V^{cmpt}$:
		\[\Hom_{\cM}(X \otimes K, Z) \cong \Hom_{\cM}(X, Y^K) \cong \Hom_{\V}(K, \Map_{\cM}(X,Y)) \] 
		\item For every cofibration $i\colon A \to B$ and fibration $p\colon Y \to X$, the induced map 
		\[(i^*,p_*)\colon\Map_{\cM}(B,Y) \to \Map_{\cM}(B,X) \times_{\Map_{\cM}(A,X)} \Map_{\cM}(A,Y)\] 
		is a fibration, and is trivial if either $i$ or $p$ is trivial.
	\end{enumerate}
\end{definition}

We can in particular consider this definition for $\V = \sset$ with the Kan model structure. 

\begin{definition} \label{def:simplicial model structure}
	A \emph{simplicial model category} is a compactly enriched model category $\cM$ over $\sset$ with the Kan model structure.
\end{definition}

More explicitly, the compact objects in $\sset$ are the \emph{finite simplicial sets},	meaning finite colimits of representables $\Delta^n$. Hence in the case of simplicial sets, simplicial model structures coincide with the original definition of Quillen \cite{quillen1967modelcats} and differ from more modern references, which would assume (co)tensor for all objects \cite{hovey1999modelcategories,hirschhorn2003modelcategories}. 

Finally, one minor difference in the definition of model categories {\`a} la Quillen manifests in the context of cofibrantly generated and combinatorial model categories, which are the final concepts we review here.

\begin{definition} \label{def:cofibrantly generated combinatorial}
	Let $\cM$ be a model category.
 \begin{itemize}[leftmargin=*]
	 \item $\cM$ is \emph{cofibrantly generated} if the underlying category of $\cM$ has small colimits, and is cofibrantly generated in the sense of \cite[Definition 11.1.2]{hirschhorn2003modelcategories}.
		\item $\cM$ is \emph{combinatorial} if the underlying category of $\cM$ is locally presentable, and $\cM$ is cofibrantly generated.
 \end{itemize}
\end{definition}

\begin{remark}
	Note the definition of cofibrantly generated model categories requires small objects \cite[Definition 10.4.1]{hirschhorn2003modelcategories} and transfinite compositions \cite[Definition 10.5.8]{hirschhorn2003modelcategories}, both of which require filtered colimits. However, the existence of filtered colimits and finite colimits implies the existence of all colimits. This means assuming the existence of small colimits in \cref{def:cofibrantly generated combinatorial} is the minimal possible condition required to characterize cofibrantly generated model categories. On the other hand, we do not need to make any additional assumptions regarding the existence of limits.
\end{remark}

\begin{remark}
	If the underlying category of $\cM$ is locally presentable, then it is in particular (co)complete. Hence being combinatorial in this case coincides with the modern definitions \cite{hovey1999modelcategories,hirschhorn2003modelcategories}.
\end{remark}

Having reviewed model categories, we also need to understand how to relate them. This is done via Quillen adjunctions.

\begin{definition} \label{def:quillen adjunction}
	Let $\cM$ and $\cN$ be two model categories. A \emph{Quillen adjunction} is an adjunction 
	\[  
	\begin{tikzcd}
	 \cM	\arrow[r, shift left=1.8, "F", "\bot"'] & \cN \arrow[l, shift left=1.8, "G"]
	\end{tikzcd}
	\] 
	such that $F$ preserves cofibrations and trivial cofibrations (equivalently, $G$ preserves fibrations and trivial fibrations).
\end{definition}

Quillen adjunctions provide us with a notion of equivalence between model categories. This requires the notion of derived (co)unit.

\begin{definition} 
	Let $(F \dashv G)$ be a Quillen adjunction between two model categories $\cM$ and $\cN$.
	\begin{itemize}[leftmargin=*]
		\item For a given object $X$ in $\cM$, the \emph{derived unit} is the map $X \xrightarrow{\eta_{X}} G F X \xrightarrow{G r_{F X}} G R F X$, where $r_{F X}\colon F X \to R F X$ is a fibrant replacement of $FX$ in $\cN$.
		\item For a given object $Y$ in $\cN$, the \emph{derived counit} is the map $F Q G Y \xrightarrow{F q_{G Y}} F G	Y \xrightarrow{\epsilon_{Y}} Y$, where $q_{G Y}\colon Q G Y \to G Y$ is a cofibrant replacement of $GY$ in $\cM$.
	\end{itemize} 
\end{definition}

\begin{definition} \label{def:quillen equivalence}
	Let $\cM$ and $\cN$ be two model categories and $(F \dashv G)$ a Quillen adjunction. 
	\begin{enumerate}[leftmargin=*]
		\item $(F \dashv G)$ is a \emph{homotopical localization} if	for every cofibrant object $X$ in $\cM$, the derived unit $X \to G R F X$ is a weak equivalence in $\cM$.
		\item $(F \dashv G)$ is a \emph{homotopical colocalization} if for every fibrant object $Y$ in $\cN$, the derived counit $F Q G Y \to Y$ is a weak equivalence in $\cN$.
		\item $(F \dashv G)$ is a \emph{Quillen	equivalence} if it is both a homotopical localization and a homotopical colocalization.
	\end{enumerate}
\end{definition}

\begin{remark}
 The term homotopical (co)localization used here matches the terminology due to Joyal and Tierney \cite[Definition 7.16, Proposition 7.17]{joyaltierney2007qcatvssegal}, but differs from other sources, such as \cite{cgmv2010localization,casacubertaraventostonks2021comparing}, where it corresponds to what is commonly known as a left or right Bousfield localization \cite{hirschhorn2003modelcategories}. See \cite[Example 5.1]{bataninwhite2024left} for an explicit example of a homotopical localization, in the sense of \cref{def:quillen equivalence}, that is provably not a left Bousfield localization.
\end{remark}

\subsection{Filter Quotients} \label{subsec:filter quotient}
Filter quotient categories have been studied extensively in the context of topos theory \cite{johnstone2002elephanti,maclanemoerdijk1994topos}. Here we review basic definitions and results.

\begin{definition}
	Let $(I,\leq)$ be a poset. A \emph{filter} $\Phi$ is a subset of $I$ that satisfies the following three properties:
	\begin{itemize}[leftmargin=*]
		\item \textbf{Non-emptiness:} $\Phi$ is non-empty.
		\item \textbf{Upward closure:} For every $x \in \Phi$ and $y \in I$, if $x \leq y$, then $y \in \Phi$.
		\item \textbf{Finite intersection:} For every $x,y \in \Phi$, there exists $z \in \Phi$ such that $z \leq x$ and $z \leq y$.
	\end{itemize}
\end{definition}

We now use filters to construct categories. Recall that in a category $\C$ with terminal object, an object $U$ is \emph{subterminal} if the unique morphism to the terminal object is a mono. Note this forms a poset with $U \leq V$ if there exists a (necessarily unique) morphism $U \to V$.

\begin{definition}
	Let $\C$ be a category with finite products. A \emph{filter of subterminal objects} on $\C$ is a filter on the poset of subterminal objects of $\C$. 
\end{definition}

Given a filter of subterminal objects, we can construct new categories. 

\begin{definition}
	Let $\C$ be a category with finite products and $\Phi$ be a filter of subterminal	objects. The \emph{filter quotient category}, denoted $\C_\Phi$, is a category with the following specifications:
	\begin{itemize}[leftmargin=*]
		\item \textbf{Objects:} The objects of $\C_\Phi$ are the objects of $\C$.
		\item \textbf{Morphisms:} The morphisms between two objects $X$ and $Y$ are given by equivalence classes of morphisms from $X \times U$ to $Y$, where $U$ is in $\Phi$. Here two morphisms $f\colon X \times U \to Y$ and $g\colon X \times V \to Y$ are equivalent if there exists $W \leq U, V$ in $\Phi$, such that the following diagram commutes: 
		\[ 
		\begin{tikzcd}
			X \times W \arrow[r, hookrightarrow] \arrow[d, hookrightarrow] & X \times U \arrow[d, "f"] \\
			X \times V \arrow[r, "g"] & Y
		\end{tikzcd}
		\]
	\end{itemize}
\end{definition}

The filter quotient category $\C_\Phi$ relates to the original category via the projection functor.

\begin{definition}
	Let $\C$ be a category with finite products and $\Phi$ be a filter of subterminal objects. The \emph{projection functor} $P_\Phi\colon \C \to \C_\Phi$ is the functor that is the identity on objects and takes a morphism to its equivalence class.
\end{definition}

Let us review some examples. Recall that a \emph{strict initial object} $\emptyset$ in a category $\C$ is an initial object such that every morphism with codomain $\emptyset$ is an isomorphism. This in particular implies that $\emptyset \times X \cong \emptyset$ for every object $X$ in $\C$. 

\begin{example} \label{ex:trivialquotient}
	Let $\C$ be a category with finite products and $\Phi$ be a filter consisting of only the terminal object $1$. Then $\C_\Phi$ is isomorphic to $\C$, as two morphisms $f,g\colon X \to Y$ are equivalent if and only if $f \circ \pi_1 = g \circ \pi_1$, which is equivalent to $f = g$.
\end{example}

\begin{example} \label{ex:principalfilter}
	Let $\C$ be a category with finite products, $U$ be a subterminal object, and $\Phi_U$ be the filter of all subterminal objects that include $U$, known as the \emph{principal filter generated by $U$}. Then, following the definition, $\C_{\Phi_U}$ can be described as a category with the same objects as $\C$ and morphisms $\Hom_{\C_U}(X,Y) = \Hom_{\C}(X \times U, Y)$. We can alternatively characterize $\C_{\Phi_U}$ as the slice	category $\C_{/U}$ of objects	over $U$.
\end{example}

\begin{example} \label{ex:initialquotient}
	Let $\C$ be	a category with finite products and strict initial object $\emptyset$. Let $\Phi$ be the filter consisting of all subterminal objects. Then any two morphisms in $\C_\Phi$ from $X$ to $Y$ are equal, as their restriction to $ X\times \emptyset = \emptyset$ are equal. Hence $\C_\Phi$ is the terminal category.
\end{example}

\begin{example} \label{ex:setsfilterquotient}
	Let $\C$ be a category with finite products such that the only subterminal objects are the terminal object $1$ and the strict initial object $\emptyset$. Then there are two possible filter quotient categories. If $\Phi	= \{1\}$, then by \cref{ex:trivialquotient} we have $\C_\Phi \cong \C$. If $\Phi = \{\emptyset, 1\}$, then by \cref{ex:initialquotient} we have that $\C_\Phi$ is the terminal category. This computation in particular applies to the categories $\set$, $\Top$, $\sset$.
\end{example}
	
\begin{example} \label{ex:pointed filter quotient}
 Let $\C$ be a pointed category	with finite products, meaning it has an object $0$ that is both terminal and initial. Then $\C$ has a single subterminal object, namely $0$. Indeed, for any subterminal object $U$, there are maps $0 \hookrightarrow U \hookrightarrow 0$, the composition of which is the identity, proving that $U \cong 0$. This means there is only one filter of subterminal objects, and the induced filter quotient is $\C$ itself. This applies in particular to the categories $\Grp$, $\Ab$, and any other category that naturally occurs in algebra.
\end{example}

We now have the following basic results about filter quotients. For a proof see \cite[Example D.5.1.7]{johnstone2002elephanti} or \cite[Section 9.4]{johnstone1977topos}.

\begin{theorem} \label{thm:filter quotient projection}
	Let $\C$ be a category with finite products and $\Phi$ be a filter of subterminal objects.
 \begin{enumerate}[leftmargin=*]
		\item $P_\Phi$ preserves finite (co)limits. Hence if $\C$ is finitely (co)complete, then so is $\C_\Phi$.
		\item $P_\Phi$ preserves exponential objects (Cartesian closure). Hence, if $\C$ is (locally) Cartesian closed, then so is $\C_\Phi$.
		\item $P_\Phi$ does not preserve infinite coproducts.
	\end{enumerate}
\end{theorem}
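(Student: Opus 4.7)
My plan is to exploit two observations. First, the hom-sets in $\C_\Phi$ admit a filtered-colimit description
\[\Hom_{\C_\Phi}(X,Y) = \colim_{U \in \Phi^{op}} \Hom_\C(X \times U, Y),\]
where the indexing is filtered thanks to the finite-intersection property of $\Phi$. Second, every $U \in \Phi$ becomes isomorphic to the terminal object in $\C_\Phi$, since the projection $U \to 1$ in $\C$ has as inverse in $\C_\Phi$ the equivalence class of $\id_U\colon 1 \times U \to U$. Together these reduce most universal-property verifications to combining commutation of filtered colimits with finite limits in $\set$ with the ability to align finitely many representatives on a common refinement $W \leq U_1,\dots,U_n$.

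For part (1), I would treat terminal, binary product, and equalizer separately. The terminal and product cases follow directly from the hom-set formula, since filtered colimits commute with finite products in $\set$. For an equalizer of $f,g\colon X \to Y$ in $\C_\Phi$, I would pick representatives $\tilde f,\tilde g\colon X \times U \to Y$ on a common $U$, form the equalizer $e\colon E \to X \times U$ in $\C$, and verify that $E$, equipped with the composite $E \to X \times U \to X$, realizes the equalizer in $\C_\Phi$ via another application of filtered-colimit commutation. Finite colimits are handled dually: given a finite diagram in $\C_\Phi$ one lifts it (via finite intersection) to a diagram over a common $U$, takes the colimit in the slice $\C/U$, and checks the universal property back in $\C_\Phi$.

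For part (2), combining the adjunction in $\C$ with the hom-set formula gives
\[\Hom_{\C_\Phi}(Z \times X, Y) \cong \colim_U \Hom_\C(Z \times X \times U, Y) \cong \colim_U \Hom_\C(Z \times U, Y^X) \cong \Hom_{\C_\Phi}(Z, Y^X),\]
so $P_\Phi(Y^X)$ serves as the exponential in $\C_\Phi$. Local Cartesian closure follows by applying the same argument in each slice, after transporting $\Phi$ to $\C/X$ via $U \mapsto U \times X$.

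For part (3), I would exhibit a counterexample: take $\C = \set^\bN$, whose subterminals correspond to subsets of $\bN$, and let $\Phi$ be a non-principal ultrafilter. Let $E_n \in \set^\bN$ be the sequence supported at $\{n\}$; since $\{n\} \notin \Phi$, one checks that $P_\Phi(E_n) \cong 0$ in $\C_\Phi$, so $\coprod_n P_\Phi(E_n) \cong 0$, whereas $P_\Phi(\coprod_n E_n) = P_\Phi(1) = 1 \not\cong 0$. The main technical obstacle across parts (1) and (2) will be the bookkeeping of representatives through composition, and especially the finite-colimit case, where commuting products with colimits along a chosen $U$ requires some care. Once the filtered-colimit description of hom-sets is in hand, however, the remaining verifications are largely formal applications of $\set$-level colimit commutation.
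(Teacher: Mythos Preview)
The paper does not supply its own proof of this theorem; it simply states the result and defers to Johnstone (\emph{Sketches of an Elephant}, Example~D.5.1.7, and \emph{Topos Theory}, Section~9.4). So there is no in-paper argument to compare against, only the cited standard treatment.

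Your sketch is essentially the standard argument one extracts from those references, phrased directly via the filtered-colimit description of hom-sets rather than via the equivalent description $\C_\Phi \simeq \colim_{U\in\Phi^{op}} \C_{/U}$ of \cref{thm:filtered colimits}. The limit and exponential cases are correct as written. For part~(3), your counterexample is correct; note that a non-principal ultrafilter is more than is needed---the Fr{\'e}chet filter of cofinite subsets already works (this is precisely Johnstone's example, also recalled later in the paper as \cref{ex:filter product sset}), and has the advantage of not invoking choice.

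One point worth making explicit: the finite-\emph{colimit} case, which you rightly flag as requiring care, genuinely needs that the functors $-\times U$ preserve finite colimits (equivalently, that pullback along $U\hookrightarrow V$ does). This holds in the settings Johnstone treats (toposes, where such pullbacks have right adjoints) and in the Cartesian closed or locally Cartesian closed situations the paper cares about, but it is not automatic from ``$\C$ has finite products'' alone. Your phrase ``commuting products with colimits along a chosen $U$ requires some care'' is exactly this hypothesis; it should be stated rather than absorbed into bookkeeping.
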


Finally, let us observe that filter quotients permit an alternative description as a filtered colimit, which will have useful formal implications later on. For a proof see \cite[Theorem V.9.2]{maclanemoerdijk1994topos}.

\begin{theorem} \label{thm:filtered colimits}
	Let $\C$ be a category with finite products and $\Phi$ be a filter of subterminal objects. Let $\C_{(-)}\colon\Phi^{op} \to \cat$ be the following functor:
	\begin{itemize}[leftmargin=*]
	\item It takes $U$ to the category $\C_U$ with the same objects as $\C$ and morphisms $\Hom_{\C_U}(X,Y) = \Hom_{\C}(X \times U, Y \times U)$.	
	\item It takes a morphism $U \to V$ (corresponding to the relation $U \leq V$ in the poset) to the functor $- \times U\colon \C_V \to \C_U$ (using $U \times V = U$).
	\end{itemize}
	Then $\C_\Phi$ is the filtered	colimit of the diagram $\C_{(-)}\colon\Phi^{op} \to \cat$.
\end{theorem}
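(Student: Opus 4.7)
The plan is to invoke the universal property of filtered colimits in $\cat$. For each $U \in \Phi$ I would define a functor $\phi_U\colon \C_U \to \C_\Phi$, check that these cocone over the transition functors $-\times W\colon \C_U \to \C_W$, and thereby obtain an induced comparison functor $\Psi\colon \colim_{\Phi^{op}} \C_{(-)} \to \C_\Phi$. The bulk of the work is then showing $\Psi$ is an isomorphism of categories.

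Concretely, $\phi_U$ is the identity on objects and sends a morphism $f\colon X \times U \to Y \times U$ in $\C_U$ to the equivalence class $[\pi_Y \circ f]$ in $\C_\Phi$. Compatibility with transition functors is immediate: for $W \leq U$ the image of $f$ in $\C_W$ corresponds (after using $U \times W = W$) to the restriction of $\pi_Y \circ f$ along $X \times W \hookrightarrow X \times U$, which by definition of the equivalence relation in $\C_\Phi$ represents the same class as $\pi_Y \circ f$ itself.

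The heart of the argument lies in the hom-set analysis, where subterminality is used essentially. Since $U$ is subterminal, $\Hom_\C(X \times U, U)$ consists solely of $\pi_U$, so every morphism $X \times U \to Y \times U$ in $\C$ is uniquely of the form $(g, \pi_U)$ for some $g\colon X \times U \to Y$. This yields a natural bijection $\Hom_{\C_U}(X, Y) \cong \Hom_\C(X \times U, Y)$, under which $\phi_U$ becomes precisely the quotient map $\Hom_\C(X \times U, Y) \twoheadrightarrow \Hom_{\C_\Phi}(X, Y)$. Surjectivity of $\Psi$ on hom-sets is then immediate. For faithfulness, if two representatives $(U, f)$ and $(V, f')$ have equal image in $\C_\Phi$, then by the definition of the equivalence relation there exists $W \leq U, V$ on which the projections to $Y$ already agree; pushing both morphisms into $\C_W$ produces pairs whose $Y$-components agree by hypothesis and whose $W$-components are forced to agree by subterminality, so they already coincide in $\C_W$. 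Since $\Psi$ is identity on objects and bijective on hom-sets, it is an isomorphism.

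The main obstacle is less conceptual than bookkeeping: one has to track the passage between morphisms with codomain $Y$ (as used in $\C_\Phi$) and those with codomain $Y \times U$ (as used in $\C_U$), and check that composition respects this translation. Both notions of composition ultimately reduce to composition in $\C$ after descending to a common $W \in \Phi$ below the indices involved, and naturality of the subterminal bijection above ensures the translation is coherent. Subterminality is invoked at exactly one substantive point: collapsing a pair $(h, \pi_W)$ into its first coordinate without losing information.
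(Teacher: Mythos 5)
Your argument is correct and is essentially the standard proof: the paper itself does not prove this statement but defers to \cite[Theorem V.9.2]{maclanemoerdijk1994topos}, and your route --- identity-on-objects cocone, the subterminality bijection $\Hom_{\C}(X\times U, Y\times U)\cong\Hom_{\C}(X\times U,Y)$, and the identification of the filtered colimit of hom-sets with the equivalence classes defining $\Hom_{\C_\Phi}(X,Y)$ --- is exactly the argument given there. The only point worth making explicit is the standard fact that a filtered colimit in $\cat$ of an identity-on-objects diagram is computed objectwise on hom-sets, which your surjectivity/faithfulness analysis implicitly relies on.
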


\begin{lemma} \label{lem:induced functor}
	Let $\C,\D$ be categories with finite products and $\Phi_\C, \Phi_\D$ be filters of subterminal objects. Let $F\colon\C \to \D$ be a functor such that $F$ restricts to a functor $\Phi_\C \to \Phi_\D$ and $F(X \times U) = F(X) \times F(U)$, for all $X$ in $\C$ and $U$ in $\Phi$. Then $F$ induces a functor $F\colon \C_{\Phi_\C} \to \D_{\Phi_\D}$.  
\end{lemma}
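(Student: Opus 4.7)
My plan is to define $F \colon \C_{\Phi_\C} \to \D_{\Phi_\D}$ on the same underlying set of objects, and then on morphisms by sending a representative $f \colon X \times U \to Y$ (with $U \in \Phi_\C$) to the morphism in $\D_{\Phi_\D}$ represented by
\[ F(f)\colon F(X) \times F(U) = F(X \times U) \to F(Y), \]
where the equality is the hypothesis on $F$, and $F(U) \in \Phi_\D$ because $F$ restricts to a functor $\Phi_\C \to \Phi_\D$. The bulk of the work is then checking three things: (i) this assignment is well-defined on equivalence classes, (ii) it preserves identities, and (iii) it preserves composition.

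For well-definedness, suppose $f \colon X \times U \to Y$ and $g \colon X \times V \to Y$ represent the same morphism, witnessed by some $W \in \Phi_\C$ with $W \leq U$ and $W \leq V$ and a commuting square as in the definition of the equivalence relation. Since $F$ preserves the subterminal inclusions $W \hookrightarrow U$ and $W \hookrightarrow V$ (as they are monos into a terminal-like object in $\Phi_\C$, and $F$ sends them into $\Phi_\D$), and since $F$ commutes with the relevant products, applying $F$ to the defining commutative square yields a commutative square witnessing $F(f) \sim F(g)$ via $F(W)$. Identity preservation is immediate: the identity on $X$ in $\C_{\Phi_\C}$ is represented by the projection $X \times U \to X$ for any $U \in \Phi_\C$, and this maps under $F$ to the analogous projection representing the identity on $F(X)$ in $\D_{\Phi_\D}$.

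For composition, recall that composing $[f]\colon X \to Y$ (represented by $f\colon X \times U \to Y$) with $[g]\colon Y \to Z$ (represented by $g\colon Y \times V \to Z$) in $\C_{\Phi_\C}$ yields the class represented by the composite $X \times U \times V \xrightarrow{f \times \id_V} Y \times V \xrightarrow{g} Z$. Applying $F$ and using that $F$ preserves the relevant products gives precisely the representative of the composite $[F(g)] \circ [F(f)]$ in $\D_{\Phi_\D}$.

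The main subtlety I anticipate is in the well-definedness step: one needs $F$ to respect not just products of the form $X \times U$, but also the structural maps like projections $X \times W \to X \times U$ induced by $W \leq U$, so the hypothesis $F(X \times U) = F(X) \times F(U)$ must be read as a compatibility with the product structure (natural isomorphism), not merely an equality of objects. Once this is made precise, everything else unwinds mechanically. An alternative, more conceptual proof is available via \cref{thm:filtered colimits}: $F$ induces compatible functors $\C_U \to \D_{F(U)}$ for each $U \in \Phi_\C$, which assemble into a morphism of filtered diagrams $\Phi_\C^{op} \to \cat$ over the inclusion $\Phi_\C \hookrightarrow \Phi_\D$, and hence by functoriality of the colimit descends to the desired functor on filter quotients.
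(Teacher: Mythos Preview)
Your proposal is correct and follows the same direct approach as the paper: verify that $F$ is well-defined on equivalence classes by applying $F$ to the witnessing square and using $F(X \times W) = F(X) \times F(W)$ with $F(W) \in \Phi_\D$. The paper's proof is in fact terser than yours---it only spells out the well-definedness step and leaves identity and composition preservation implicit---so your added checks (and the remark that the product hypothesis must be read as a natural identification, not a bare equality of objects) are a welcome elaboration rather than a deviation.
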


\begin{proof}
	 By assumption, if $f, g\colon c \to d$ are equivalent, then $f \circ (\pi_W) = g \circ (\pi_W)$ for some $W \in \Phi_\C$, which implies that $F(f) \circ \pi_{F(W)} = F(g) \circ \pi_{F(W)}$, where  $F(W)$ in $\Phi_\D$, using the assumptions on $F$. Hence, $F$ induces a functor $\C_{\Phi_\C} \to \D_{\Phi_\D}$.
\end{proof}	

\begin{corollary} \label{cor:induced adjunction}
	Let $L\colon\C \to \D$ be a left adjoint with right adjoint $R\colon\D \to \C$. Moreover, assume $\C,\D$ have finite products and $L(X \times U) = L(X) \times L(U)$, for $X$ in $\C$ and $U$ in $\Phi$. Let $\Phi_\C$ ($\Phi_\D$) be a filter of subterminal objects on $\C$ ($\D$), such that $L$ ($R$) restricts to a functor $L\colon\Phi_\C \to \Phi_\D$ ($R\colon\Phi_\D \to \Phi_\C$). Then there is an induced adjunction between $\C_{\Phi_\C}$ and $\D_{\Phi_\D}$.
\end{corollary}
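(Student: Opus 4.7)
The plan is to apply \cref{lem:induced functor} twice to obtain induced functors between the filter quotient categories, and then verify that the adjunction bijection descends to the quotients by combining the hom-set description implicit in \cref{thm:filtered colimits} with a cofinality argument at the level of the indexing filters.

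First, applying \cref{lem:induced functor} to $L$ is immediate from the given hypotheses. For $R$, the required product-preservation $R(Y \times V) = R(Y) \times R(V)$ holds automatically since right adjoints preserve all limits, and $R$ restricts to $\Phi_\D \to \Phi_\C$ by assumption; thus \cref{lem:induced functor} applied to $R$ also yields an induced functor $R \colon \D_{\Phi_\D} \to \C_{\Phi_\C}$.

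Next, from \cref{thm:filtered colimits} together with the observation that $\Hom_\C(X \times U, Y \times U) \cong \Hom_\C(X \times U, Y)$ when $U$ is subterminal (the second coordinate being the unique map to $U$), one extracts the hom-formula $\Hom_{\C_\Phi}(X, Y) \cong \colim_{U \in \Phi^{op}} \Hom_\C(X \times U, Y)$. Applying this on both sides, using the adjunction $L \dashv R$ in $\C, \D$, and invoking the hypothesis $L(X \times U) = L(X) \times L(U)$, the desired bijection $\Hom_{\D_{\Phi_\D}}(L(X), Y) \cong \Hom_{\C_{\Phi_\C}}(X, R(Y))$ reduces to the comparison
\[ \colim_{V \in \Phi_\D^{op}} \Hom_\D(L(X) \times V, Y) \cong \colim_{U \in \Phi_\C^{op}} \Hom_\D(L(X) \times L(U), Y), \]
which in turn follows from cofinality of $L^{op} \colon \Phi_\C^{op} \to \Phi_\D^{op}$. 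Naturality in $X$ and $Y$ then propagates from naturality of each step.

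The main obstacle I anticipate is precisely this cofinality check. To verify it, for each $V \in \Phi_\D$ I would show that the comma category whose objects are pairs $(U \in \Phi_\C, L(U) \leq V)$ is non-empty and connected. Non-emptiness is witnessed by $U = R(V)$, using the counit $LR(V) \to V$ of the original adjunction together with $R(V) \in \Phi_\C$, which is exactly where the hypothesis on $R$ enters. Connectedness follows from the filter property of $\Phi_\C$: for any two objects $(U_1, L(U_1) \leq V)$ and $(U_2, L(U_2) \leq V)$, a common lower bound $U_3 \leq U_1, U_2$ in $\Phi_\C$ yields a connecting vertex $(U_3, L(U_3) \leq V)$ with arrows from both. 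Once cofinality is in hand, the rest is formal manipulation of adjunction data.
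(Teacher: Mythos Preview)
Your argument is correct, but it takes a different and more laborious route than the paper. After obtaining the induced functors via \cref{lem:induced functor} (which the paper does as well, and where your observation that $R$ automatically satisfies the product hypothesis as a right adjoint is a nice point left implicit there), the paper simply notes that the unit and counit of the original adjunction are natural transformations whose component morphisms descend along $P_{\Phi_\C}$ and $P_{\Phi_\D}$, and that the triangle identities---being equalities of composites---are preserved by any functor, in particular by the projection functors. This yields the adjunction in one line. By contrast, you establish the hom-set bijection directly via the filtered-colimit description of \cref{thm:filtered colimits} and a cofinality argument for $L^{op}\colon \Phi_\C^{op} \to \Phi_\D^{op}$, which is where the hypothesis on $R$ enters for you (to produce the witness $R(V)$). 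Your approach has the virtue of making the adjunction isomorphism explicit at the level of hom-sets and of isolating exactly how each hypothesis is consumed; the paper's unit/counit argument is shorter and avoids the cofinality check entirely, at the cost of leaving the hom-set description implicit.
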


\begin{proof}
	By \cref{lem:induced functor}, there are induced functors $L\colon \C_{\Phi_\C} \to \D_{\Phi_\D}$ and $R\colon \D_{\Phi_\D} \to \C_{\Phi_\C}$. Moreover, the unit and counit map of the adjunction still satisfy the triangle identity in $\C_{\Phi_\C}, \D_{\Phi_\D}$, giving us the desired adjunction
\end{proof}

\section{Filter Quotient Model Categories}
We now prove that in suitable situations, filter quotients preserve model structures. 

\begin{definition}
	Let $\cM$ be a model category. An object $U$ is \emph{homotopically subterminal} if it is fibrant and the diagonal map $\Delta\colon U \to U \times U$ is a weak equivalence.
\end{definition}

\begin{example} \label{ex:terminal object}
 The terminal object $1$ is always homotopically	subterminal.
\end{example}

\begin{definition}
 Let $\cM$ be a model category. A homotopically subterminal object $U$ is called \emph{discrete} if it is also subterminal in $\cM$. 
\end{definition}

\begin{example} \label{ex:subterminal sset}
	Let $\sset^{Kan}$ be the Kan model structure on simplicial sets. Then a fibrant simplicial set $K$ (a Kan complex) is homotopically subterminal if it is empty or contractible. Among those the discrete ones are $\Delta^0$ and $\emptyset$. 
\end{example}

\begin{lemma} \label{lemma:simplicial subterminal}
	Let $\cM$ be a simplicial model category and $U$ be a fibrant object. Then the following are equivalent.
	\begin{enumerate}[leftmargin=*]
		\item $U$ is homotopically subterminal.
		\item For every cofibrant object $X$, the Kan complex $\Map(X,U)$ is homotopically subterminal.
	\end{enumerate}
\end{lemma}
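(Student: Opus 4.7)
The plan is to reduce both implications to manipulations of the mapping Kan complex $\Map(X,U)$, relying on two background facts. First, $\Map(X,-)$ preserves finite products; this can be verified by testing against compact simplicial sets $K$ via the adjunctions of \cref{def:enriched model structure} and extending to arbitrary $K$ by writing it as a filtered colimit of compact ones. Second, for $X$ cofibrant and $U$ fibrant, $\Map(X,U)$ is a Kan complex, which follows from axiom (3) of \cref{def:enriched model structure} applied to $\emptyset \to X$ and $U \to 1$. Under the resulting identification $\Map(X, U\times U) \cong \Map(X,U) \times \Map(X,U)$, the morphism $\Map(X, \Delta_U)$ is exactly the diagonal of the Kan complex $\Map(X,U)$.

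For $(1) \Rightarrow (2)$, I would assume $\Delta_U$ is a weak equivalence between the fibrant objects $U$ and $U\times U$, and then invoke Ken Brown's lemma together with axiom (3) of \cref{def:enriched model structure} to conclude that $\Map(X,-)$ preserves weak equivalences between fibrant objects for every cofibrant $X$. Consequently $\Map(X,\Delta_U)$, being the diagonal of $\Map(X,U)$, is a weak equivalence, so $\Map(X,U)$ is homotopically subterminal.

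For $(2) \Rightarrow (1)$, I would observe that if every $\Map(X,U)$ is homotopically subterminal for cofibrant $X$, then $\Map(X,\Delta_U) = \Delta_{\Map(X,U)}$ is a weak equivalence of Kan complexes for every such $X$. Since both $U$ and $U\times U$ are fibrant, the standard detection principle then applies: a map between fibrant objects is a weak equivalence if and only if it induces bijections $[X,-] = \pi_0 \Map(X,-)$ for every cofibrant $X$, and this is implied by the stronger hypothesis of a weak equivalence on mapping spaces. Hence $\Delta_U$ is a weak equivalence and $U$ is homotopically subterminal.

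The only subtle point is that the paper works with the Quillen-style simplicial model category of \cref{def:simplicial model structure}, where tensor and cotensor are only defined against compact simplicial sets. This restriction is harmless here, since product preservation of $\Map(X,-)$ and the identification $\pi_0 \Map(X,Y) = [X,Y]$ (for $X$ cofibrant, $Y$ fibrant) only require tensors with $\Delta^0$ and $\Delta^1$, both of which are compact. I therefore do not anticipate a real obstacle: once this bookkeeping is in place, each direction amounts to a one-line application of SM7.
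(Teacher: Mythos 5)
Your argument is correct and follows essentially the same route as the paper: the paper's proof is a one-line appeal to the fact that a map between fibrant objects is a weak equivalence if and only if $\Map(X,-)$ sends it to a Kan equivalence for every cofibrant $X$, which is exactly the detection principle you invoke after identifying $\Map(X,\Delta_U)$ with the diagonal of $\Map(X,U)$. Your additional bookkeeping (product preservation of $\Map(X,-)$, fibrancy of $\Map(X,U)$ via SM7, and the compactness caveat) just fills in details the paper leaves implicit.
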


\begin{proof}
	Direct implication of the fact that a morphism in $\cM$ is a weak equivalence if and only if for every cofibrant object $X$, the functor $\Map(X,-)$ takes it to a Kan equivalence.
\end{proof}

\begin{definition}
	Let $\cM$ be a model category. Define $\Subfib(\cM)$ as the poset with elements discrete homotopically subterminal objects and $U \leq V$ if $\Hom(U,V)$ is non-empty.  
\end{definition}

\begin{definition}
	Let $\cM$ be a model category. A \emph{filter of discrete homotopical subterminal objects} is a filter in the poset $\Subfib(\cM)$. 
\end{definition}

\begin{remark} \label{rem:terminal}
	The non-emptiness and \cref{ex:terminal object} implies that every filter of discrete homotopical subterminal objects includes the terminal object $1$.
\end{remark}

Given a model category $\cM$ and any filter of subterminal objects $\Phi$, we can construct a filter quotient $\cM_\Phi$. We now want to prove that for suitable $\Phi$, $\cM_\Phi$ comes with a model structure.

\begin{definition}
	Let $\cM$ be a model category and $\Phi$ be a filter of discrete homotopical subterminal objects. A class of morphisms $S$ is called \emph{$\Phi$-product stable}, if for every $f$ in $S$ and $U$ in $\Phi$, $f \times U$ is in $S$. 
\end{definition}

\begin{notation} \label{not:s phi}
	Let $\cM$ be a model category and $\Phi$ a filter of discrete homotopical subterminal objects. Let $S$ be a class of morphisms that is $\Phi$-product stable. Let $S_\Phi$ denote the class of morphisms $f$ in $\cM_\Phi$ with the property that there exists a $U$ in $\Phi$, such that $f \times U$ is in $S$.
\end{notation}

\begin{remark} \label{rem:s phi}
 Let us make some observations about \cref{not:s phi}.
	\begin{enumerate}[leftmargin=*]
		\item If $f$ is in $S_\Phi$ with the assumption that $f \times U$ is in $S$, then for all $W \leq U$, $f \times U \times W$ is also in $S$, as $S$ is $\Phi$-product stable. 
		\item The characterization is independent of a representative, as for $f,f' \in [f]$, $f \times U  = f' \times U$ for sufficiently large $U$ in $\Phi$.
		\item The map $P_\Phi\colon \cM \to	\cM_\Phi$ takes a morphism $f$ in $S$ to a morphism in $S_\Phi$, as $f \times 1$ is in $S$. 
	\end{enumerate} 
\end{remark}

Our aim is to apply these constructions to the three classes of morphisms defining model categories. However, this will not necessarily work as we would like. We do have the following immediate result regarding fibrations.

\begin{lemma}
 Let $\cM$ be a model category and $\Phi$ a filter of discrete homotopical subterminal objects. The class of fibrations is $\Phi$-product stable.
\end{lemma}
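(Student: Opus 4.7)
The plan is quite short because the statement is essentially a formal consequence of the stability of fibrations under pullback. Given a fibration $f\colon X \to Y$ in $\cM$ and any $U \in \Phi$, I would observe that the morphism $f \times \id_U\colon X \times U \to Y \times U$ fits into a pullback square
\[
\begin{tikzcd}
X \times U \arrow[r] \arrow[d, "f \times \id_U"'] & X \arrow[d, "f"] \\
Y \times U \arrow[r, "\pi_Y"'] & Y
\end{tikzcd}
\]
whose horizontal arrows are the projections onto the first factor. This square is a pullback by the universal property of the product, and it exists since $\cM$ has finite limits.

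The key step is then to invoke the fact that in any model category (in the sense of Quillen, as adopted in \cref{subsec:model categories}), fibrations are closed under pullback along arbitrary morphisms. Applying this to the square above shows that $f \times \id_U$ is a fibration, which is exactly the content of $\Phi$-product stability for the class of fibrations.

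I do not anticipate any real obstacle here: the argument uses neither the subterminal nor the homotopical subterminal nature of $U$, only the existence of the product, so the proof goes through for any object $U$ whatsoever. The only thing worth noting is that the standard closure of fibrations under pullback is already available in Quillen's original axiomatization (via the lifting property against trivial cofibrations), so no modern strengthening of the definition is needed.
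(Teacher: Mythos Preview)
Your argument is correct and is exactly the reason the paper treats this lemma as ``immediate'': the product $f \times \id_U$ is the pullback of $f$ along the projection $Y \times U \to Y$, and fibrations are stable under pullback via the lifting axiom. The paper gives no further proof, so your write-up simply makes explicit what the paper leaves implicit.
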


This does not generalize to other classes.

\begin{example}
 Let $\Fun([1] \times [1], \sset^{Kan})^{proj}$ be the projective model structure on the Kan model structure on $\sset$. Then an object $F\colon [1] \times [1] \to \sset$ is cofibrant if and only if the induced map $F(1,0) \coprod_{F(0,0)} F(0,1) \to F(1,1)$ is a cofibration.
	
	Let $\Phi = \{U_1,U_2,U_3\}$ be the following (principal) filter (\cref{ex:principalfilter}).
\[ \left\{
	\begin{tikzcd}[row sep=0.15cm, column	sep=0.6cm]
		\emptyset \arrow[r] \arrow[dd] & \Delta^0 \arrow[dd] &[-.6cm] &[-.6cm] \emptyset \arrow[r] \arrow[dd] & \Delta^0 \arrow[dd] &[-.6cm] &[-.6cm] \Delta^0 \arrow[r] \arrow[dd] & \Delta^0 \arrow[dd] &[-0.6cm] \\ 
		&  & , & & & , & & & \\
		\emptyset \arrow[r] & \Delta^0  & & \Delta^0 \arrow[r] & \Delta^0 & & \Delta^0 \arrow[r] & \Delta^0 
	\end{tikzcd}
	\right\}.
\]
In the projective model structure the fibrations are point-wise, hence $\Phi$ is indeed a filter of discrete homotopical subterminal objects. Let $\sigma\colon \Delta^0 \coprod \Delta^0 \to \Delta^0 \coprod \Delta^0$ be the unique non-trivial automorphism. Now, the object $F$, defined as the left hand square  
\[
 \begin{tikzcd}[row sep=0.15cm, column	sep=0.6cm]
		 &[-.8cm] \Delta^0 \coprod \Delta^0 \coprod \Delta^0 \coprod \Delta^0 \arrow[r,  "\sigma + \id"] \arrow[dd, "\id + \id"] & \Delta^0 \coprod \Delta^0 \arrow[dd] &[-1cm]  &[1cm] &[-.8cm] \emptyset \arrow[r] \arrow[dd] & \Delta^0 \coprod \Delta^0 \arrow[dd] \\ 
		 F = & &  &\strut \arrow[r, Rightarrow] & F \times U_1 =  & & &\\
		 & \Delta^0 \coprod \Delta^0 \arrow[r] & \Delta^0  & & & \emptyset \arrow[r] & \Delta^0  
	\end{tikzcd} 
\]
is cofibrant, as it is a pushout square, but $F \times U_1$, given on the right, is not, as the induced map $\Delta^0 \coprod \Delta^0 \to \Delta^0$ is not a cofibration in the Kan model structure. Hence the class of cofibrations is not $\Phi$-product stable.
\end{example}

\begin{example}
	Let $\sset^{Joy}$ be the Joyal model structure on the category of simplicial sets, which induces a model structure on the slice-category $(\sset_{/\Delta^2})^{Joy}$. Let $<02>\colon \Delta^1 \to \Delta^2$ be the unique map given by $0 \mapsto 0$, $1 \mapsto 2$, which is indeed a fibration, and let $\Phi$ be the principal filter of subobjects in $(\sset_{/\Delta^2})^{Joy}$ consisting of all subobjects $\Delta^2$ that include $<02>$ (\cref{ex:principalfilter}). Then the map $\Lambda^2_1 \to \Delta^2$ is a weak equivalence, but 
	\[ \Delta^0 \coprod \Delta^0 = \Lambda^2_1 \times_{\Delta^2} \Delta^1 \to \Delta^2 \times_{\Delta^2} \Delta^1  = \Delta^1 \]
	is not a weak equivalence in $(\sset_{/\Delta^2})^{Joy}$, and so the	class of weak equivalences is not $\Phi$-product stable.
\end{example}

\begin{definition}
	Let $\cM$ be a model category. A \emph{model filter} is a filter of discrete homotopically subterminal objects $\Phi$ such that cofibrations and weak equivalences are $\Phi$-product stable. 
\end{definition}

\begin{theorem} \label{thm:filter quotient model structure}
	Let $\cM$ be a model category and denote the fibrations/cofibrations/weak equivalences by $\cF/\cC/\cW$. Let $\Phi$ be a model filter on $\cM$. Then $\cM_\Phi$ forms a model structure with fibrations/cofibrations/weak equivalences given by $\cF_\Phi/\cC_\Phi/\cW_\Phi$.
\end{theorem}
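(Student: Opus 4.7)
The first observation is that finite limits and colimits in $\cM_\Phi$ are provided by \cref{thm:filter quotient projection}(1), so the remaining task is to verify the four model structure axioms for the classes $\cF_\Phi, \cC_\Phi, \cW_\Phi$ of \cref{not:s phi}. My strategy is to lift each axiom back to $\cM$, exploiting the description in \cref{thm:filtered colimits} of $\cM_\Phi$ as a filtered colimit of categories $\cM_U$, together with the $\Phi$-product stability of all three classes ($\cF$ by the preceding lemma, $\cC$ and $\cW$ by the hypothesis that $\Phi$ is a model filter). Given any finite diagram in $\cM_\Phi$ together with finitely many class-membership hypotheses, filteredness of $\Phi$ lets me choose a single $U \in \Phi$ so that all morphisms have representatives in $\cM$, all commutativity relations become strict equalities in $\cM$, and all class memberships are witnessed in $\cM$; $\Phi$-product stability guarantees these properties persist under further shrinking of $U$.

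With such a common $U$ fixed, the $2$-out-of-$3$ and retract axioms follow immediately from the same axioms in $\cM$, and then descend to $\cM_\Phi$ via $P_\Phi$. For factorization, given $f\colon X \to Y$ in $\cM_\Phi$ with representative $\tilde f\colon X \times U \to Y$ in $\cM$, I factor $\tilde f$ in $\cM$ as $X \times U \xrightarrow{i} Z \xrightarrow{p} Y$ using the factorization axiom of $\cM$. Applying $P_\Phi$ and precomposing with the isomorphism $X \cong X \times U$ available in $\cM_\Phi$ (which exists because $U$ is subterminal, so that $P_\Phi(\pi_X)$ is invertible) yields the desired factorization $X \to Z \to Y$ of $f$ in $\cM_\Phi$; the composite $X \to Z$ has $i$ itself as a representative in $\cM$, hence lies in the correct class by \cref{rem:s phi}(3), and $P_\Phi(p)\colon Z \to Y$ is handled similarly.

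The main obstacle is the lifting axiom, because a square that commutes in $\cM_\Phi$ commutes in $\cM$ only after restriction to a smaller $U \in \Phi$. Given a lifting problem in $\cM_\Phi$ consisting of $i \in \cC_\Phi$, $p \in \cF_\Phi$ with at least one in $\cW_\Phi$, together with a commutative square, I choose a single $U \in \Phi$ small enough to simultaneously provide representatives $\tilde i, \tilde p$ of $i, p$ in $\cM$ with $\tilde i \in \cC$ and $\tilde p \in \cF$ (with the appropriate one also in $\cW$), representatives of the two diagonal maps in the square, and strict commutativity of that square in $\cM$. Such a $U$ exists by the filteredness of $\Phi$ combined with the $\Phi$-product stability of the three classes. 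The lifting axiom of $\cM$ then supplies a lift in $\cM$, and its image under $P_\Phi$ is the required lift in $\cM_\Phi$.
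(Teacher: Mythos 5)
Your proposal is correct and follows essentially the same strategy as the paper: use filteredness of $\Phi$ together with $\Phi$-product stability to find a single $U \in \Phi$ witnessing all commutativities and class memberships in $\cM$, apply the corresponding axiom of $\cM$, and push the result back along $P_\Phi$ (with factorization handled, as in the paper, by factoring a representative $X \times U \to Y$ directly and using the invertibility of $X \times U \to X$ in $\cM_\Phi$).
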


\begin{proof}
	We separately confirm the axioms of a model category. 
 \begin{itemize}[leftmargin=*, itemsep=.2cm]
		\item[] 
	\textbf{Finite (co)limits:} Finite (co)completeness of $\cM_\Phi$ follows from finite (co)completeness of $\cM$ and \cref{thm:filter quotient projection}.
	
	\item[] 
 \textbf{Lifting:} Assume we have the following diagram in $\cM_\Phi$:
	\[
		\begin{tikzcd}
			A \arrow[r, "f"] \arrow[d, hookrightarrow, "i"'] & Y \arrow[d, "p", twoheadrightarrow] \\ 
			B \arrow[r, "g"'] & X
		\end{tikzcd},
	\]
	where $p$ is a fibration and $i$ is a cofibration. Without loss of generality we will also assume that $i$ is a weak equivalence, as the other case is analogous. Pick $W_1$ in $\Phi$ such that $(pf) \times W_1 = (gi) \times W_1$, pick $W_2$ in $\Phi$ such that $i \times W_2$ is a cofibration, pick $W_3$ in $\Phi$ such that $i \times W_3$ is a weak equivalence, and pick $W_4$ in $\Phi$ such that $p \times W_4$ is a fibration. Pick $W \leq W_1, W_2, W_3, W_4$ in $\Phi$. We now have the following diagram in $\cM$
	\[
		\begin{tikzcd}[row	sep=1cm, column sep=1cm]
			W \times A \arrow[r, "W \times f"] \arrow[d, hookrightarrow, "W \times i"'] & W \times Y \arrow[d, "W \times p", twoheadrightarrow] \\ 
			W \times B \arrow[r, "W \times g"'] \arrow[ur, dashed, "h" description] & W \times X
		\end{tikzcd}.
	\]
	By construction this diagram commutes in $\cM$, $W \times i$ is a trivial cofibration and $W \times p$ is a fibration and so has a lift $h$. Now the class of $h$ in $\cM_\Phi$ is the desired lift of the original diagram.
 
	\item[] 
	\textbf{Factorization:} Let $f\colon X \times U \to Y$ be a representative of an arbitrary morphism in $\cM_\Phi$. By the factorization axiom in $\cM$, we can factor $f$ as $f = p \circ i$, where $i$ is a trivial cofibration and $p$ is a fibration. By \cref{rem:s phi} (3), the equivalence class of $i$ ($p$) is a trivial cofibration (fibration) in $\cM_\Phi$ and we still have $f = p \circ i$ in $\cM_\Phi$, giving us the desired factorization. 

	\item[]
	\textbf{Retracts:} Assume that $f$ is a fibration in $\cM_\Phi$ and $g$ is a retract of $f$. Pick $W_1$ in $\Phi$, so that $g \times W_1$ is a retract of $f \times W_1$ in $\cM$ and a $W_2$, such that $f \times W_2$ is a fibration in $\cM$. Let $W \leq W_1, W_2$ in $\Phi$, then $g \times W$ is a retract of the fibration $f \times W$ and so a fibration as well, which implies that $g$ is a fibration in $\cM_\Phi$. The case for cofibrations and weak equivalences is analogous.

	\item[] 
	\textbf{$2$-out-of-$3$:} Let $f,g$ be two composable morphisms in $\cM_\Phi$. We prove that if $f$ and $g$ are weak equivalences, then so is $gf$, the other two cases are analogous. Pick $W_1$ in $\Phi$, such that $f \times W_1$ is a weak equivalence and $W_2$, such that $g \times W_2$ is a weak equivalence in $\cM$. Now let $W \leq W_1, W_2$ in $\Phi$. Then $gf \times W$ is a weak equivalence in $\cM$, and so $gf$ is a weak equivalence in $\cM_\Phi$.   
\end{itemize}
\end{proof}
 
\begin{remark}
	The functor $P_\Phi\colon \cM \to \cM_\Phi$ preserves (co)fibrations and weak equivalences, however is not left or right Quillen, as it is not a left or right adjoint.
\end{remark}

\section{From the original to the filter quotient}
In this section we aim to discuss which classical model categorical properties are preserved via the filter quotient construction. For every case that fails we present a counterexample.

\subsection{Limits and Colimits} \label{subsec:limits}
We have already established that $P_\Phi$ preserves finite (co)limit diagrams and hence preserves finite (co)completeness (\cref{thm:filter quotient projection}). However, this does not generalize to infinite (co)completeness. For an explicit counterexample see \cref{ex:filter product sset}. 

\subsection{Cartesian Closure} \label{subsec:cartesian closure}
We will prove $P_\Phi$ preserves internal mapping objects and, hence, $P_\Phi$ preserves the property of being Cartesian closed.

\begin{notation}
	Let $\cM$ be a model category. For two morphisms $i\colon A \to B, j\colon C \to D$ in $\cM$, we denote the pushout product of $i$ and $j$, $A \times D \coprod_{A \times C} B \times C \to B \times D$ by $i \square j$. 
\end{notation}

Recall that a model category $\cM$ is \emph{Cartesian closed} if its underlying category is Cartesian closed and for two cofibrations $i\colon A \to B$ and $j\colon C \to D$, the pushout product $i \square j$ is a cofibration that is a trivial cofibration if one of $i$ or $j$ is a trivial cofibration.

\begin{proposition} \label{prop:cartesian closure}
 Let $\cM$  be a model category and $\Phi$ a model filter. Then $P_\Phi$ preserves pushout products and pullback-exponentials. Hence, if $\cM$ is a Cartesian closed model category, then so is $\cM_\Phi$.
\end{proposition}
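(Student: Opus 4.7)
The plan is to leverage \cref{thm:filter quotient projection}, which states that $P_\Phi$ preserves finite (co)limits and exponential objects. Since both the pushout product and pullback-exponential are defined purely in terms of finite (co)limits, products, and exponentials, the first two claims follow essentially formally. The model-categorical content (Cartesian closure as a model structure) then reduces to a representative-chasing argument, combining the classes $\cC_\Phi, \cW_\Phi$ from \cref{not:s phi} with the $\Phi$-product stability guaranteed by $\Phi$ being a model filter.

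For the first part, I would observe that for any morphisms $i\colon A \to B$ and $j\colon C \to D$ in $\cM$, the pushout product $i \square j$ is built entirely from binary products and a single pushout. Since $P_\Phi$ preserves both by \cref{thm:filter quotient projection}(1), applying $P_\Phi$ to the defining diagram yields $P_\Phi(i) \square P_\Phi(j)$ in $\cM_\Phi$. The same argument, now using \cref{thm:filter quotient projection}(2) for exponentials together with preservation of pullbacks, shows that $P_\Phi$ preserves pullback-exponentials.

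For Cartesian closure of $\cM_\Phi$ as a model structure, I recall that the condition on $\cM$ is: for any two cofibrations $i,j$ the pushout product $i \square j$ is a cofibration, and trivial if either $i$ or $j$ is a weak equivalence. Given cofibrations $i,j$ in $\cM_\Phi$, by \cref{not:s phi} I may pick $U,V \in \Phi$ such that $i \times U$ and $j \times V$ are cofibrations in $\cM$. Choosing $W \in \Phi$ with $W \leq U,V$ and invoking $\Phi$-product stability of cofibrations, both $i \times W$ and $j \times W$ are cofibrations in $\cM$. By Cartesian closure in $\cM$, their pushout product is a cofibration, and the identity $W \times W \cong W$ (since $W$ is subterminal) yields $(i \times W) \square (j \times W) \cong (i \square j) \times W$. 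Hence $i \square j$ lies in $\cC_\Phi$. The case in which one of $i,j$ is additionally a weak equivalence is handled identically, shrinking $W$ further so that the additional factor is trivial in $\cM$ and invoking $\Phi$-product stability of weak equivalences.

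The only real obstacle is the bookkeeping identity $(i \times W) \square (j \times W) \cong (i \square j) \times W$, which is where subterminality of $W$ enters in an essential way; if one attempted the analogous argument with an arbitrary $U$ rather than a subterminal object one would obtain extra factors of $U$. Beyond this small observation the proof is a routine pairing of $\Phi$-product stability with Cartesian closure in $\cM$, and thus no technical difficulty beyond \cref{thm:filter quotient projection} is anticipated.
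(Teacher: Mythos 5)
Your proposal is correct and follows essentially the same route as the paper: reduce the categorical claims to \cref{thm:filter quotient projection}, then pick a common $W \in \Phi$ making representatives of $i$ and $j$ (trivial) cofibrations and use the identity $(i \square j) \times W \cong (i \times W) \square (j \times W)$ to import Cartesian closure from $\cM$. Your observation that subterminality ($W \times W \cong W$) is what makes this identity work is a nice complement to the paper's phrasing, which instead emphasizes that $- \times W$ commutes with products and pushouts; both ingredients are needed and both proofs use them.
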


\begin{proof}
	Let $\cM$ be a Cartesian closed model category. By \cref{thm:filter quotient projection}, the underlying category of $\cM_\Phi$ is Cartesian closed and $P_\Phi$ preserves Cartesian closure. Let $i\colon A \to B$ and $j\colon C \to D$ be two cofibrations in $\cM_\Phi$. We need to prove that the pushout product $i \square j$ is a cofibration and is a trivial cofibration if one of $i$ or $j$ are a trivial cofibration. 

	Pick $W_1$ and $W_2$ in $\Phi$, such that $i \times W_1$ and $j \times W_2$ are cofibrations in $\cM$ and let $W \leq W_1, W_2$. By direct computation $(i \square j) \times W \cong (i \times W) \square (j \times W)$, using the fact that $- \times W$ commutes with products and pushouts. As $\cM$ is Cartesian closed, $(i \times W) \square (j \times W)$ is also a cofibration, and so $i \square j$ is a cofibration in $\cM_\Phi$. The case where $i$ or $j$ is a trivial cofibration follows analogously. Hence, $\cM_\Phi$ is a Cartesian closed model category.
\end{proof}

\subsection{Local Presentability and Cofibrant Generation}
There are examples where $\cM$ is cofibrantly generated or even combinatorial, but $\cM_\Phi$ is not. See \cref{ex:filter product sset,ex:filter product top} for explicit counterexamples. 

\subsection{Properness} \label{subsec:properness}
Let us prove that $\cM_\Phi$ is (right or left) proper if $\cM$ is (right or left) proper. Recall that a model category is \emph{right proper} (\emph{left proper}) if the pullback of a weak equivalence along a fibration (pushout along a cofibration) is again a weak equivalence.

\begin{proposition} \label{prop:right proper}
	Let $\cM$ be a model category and $\Phi$ a model filter. If $\cM$ is right proper, then $\cM_\Phi$ is right proper.
\end{proposition}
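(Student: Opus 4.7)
The plan is to transfer right properness from $\cM$ to $\cM_\Phi$ via representatives, exploiting the fact that every $W \in \Phi$ becomes isomorphic to the terminal object in $\cM_\Phi$, so that pullbacks in $\cM$ of chosen representatives descend to pullbacks in $\cM_\Phi$.

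Let $f\colon Y \to X$ be in $\cW_\Phi$ and $p\colon X' \to X$ be in $\cF_\Phi$, and denote by $g\colon Y' \to X'$ the pullback of $f$ along $p$ in $\cM_\Phi$. The first step is to use the filter axioms together with $\Phi$-product stability to pick a single $W \in \Phi$ such that $f$ has a representative $\tilde f\colon Y \times W \to X$ in $\cM$ for which $(\tilde f, \pi_W)\colon Y \times W \to X \times W$ is a weak equivalence in $\cM$, and $p$ has a representative $\tilde p\colon X' \times W \to X$ for which $(\tilde p, \pi_W)$ is a fibration in $\cM$. Then, I would form the pullback $R := (Y \times W) \times_X (X' \times W)$ in $\cM$, with structure map $r\colon R \to X' \times W$. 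Since $W$ is subterminal, both projections $R \to W$ are forced to agree, so $R$ also computes the pullback of $(\tilde f, \pi_W)$ along $(\tilde p, \pi_W)$ over $X \times W$; right properness of $\cM$ then yields $r \in \cW$.

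Finally, I would descend back to $\cM_\Phi$. Every $W \in \Phi$ is isomorphic to the terminal object in $\cM_\Phi$ (the inverse $1 \to W$ is given by the representative $\id_W\colon W \to W$ at level $W$), so the projections $Y \times W \to Y$ and $X' \times W \to X'$ are isomorphisms in $\cM_\Phi$, and $P_\Phi(\tilde f)$, $P_\Phi(\tilde p)$ correspond to $f$, $p$ under these isomorphisms. Since $P_\Phi$ preserves finite limits (\cref{thm:filter quotient projection}), we get $R \cong Y'$ in $\cM_\Phi$, and $P_\Phi(r)$ corresponds to $g$ composed with the isomorphism $X' \cong X' \times W$. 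By \cref{rem:s phi}(3), $P_\Phi(r) \in \cW_\Phi$, and applying $2$-out-of-$3$ for $\cW_\Phi$ concludes $g \in \cW_\Phi$. The main bookkeeping obstacle is verifying the compatibility of the isomorphisms $A \cong A \times W$ in $\cM_\Phi$ with the pullback square, which is essentially unwinding the universal property of the pullback together with the subterminality of $W$; the left-properness statement would proceed by the dual argument, swapping pullbacks for pushouts and cofibrations for fibrations.
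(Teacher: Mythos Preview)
Your argument for right properness is correct and follows the same route as the paper: choose a single $W\in\Phi$ making the representatives a fibration and a weak equivalence in $\cM$, form the pullback in $\cM$, apply right properness there, and descend using that $P_\Phi$ preserves finite limits and sends $W$ to an object isomorphic to the terminal. The paper compresses your bookkeeping into the single observation $(p\times W)^*(f\times W)\cong (p^*f)\times W$, but the content is identical.

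One caveat on your closing remark: the dual argument for left properness does \emph{not} go through verbatim. The step ``$P_\Phi$ preserves pullbacks and $(-\times W)$ preserves pullbacks'' is what lets you identify the pullback formed in $\cM$ with the one in $\cM_\Phi$; for pushouts you would need $(-\times W)$ to preserve pushouts, which is not automatic. The paper accordingly adds this as an explicit hypothesis in its left-properness statement.
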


\begin{proof}
 Let $p\colon Y \to X$ be a fibration and $f\colon Z \to X$ be a weak equivalence in $\cM_\Phi$. Pick $W_1$ in $\Phi$ such that $p \times W_1$ is a fibration in $\cM$ and $W_2$ in $\Phi$, such that $f \times W_2$ is a weak equivalence in $\cM$, and let $W \leq W_1, W_2$ in $\Phi$. As $\cM$ is right proper, the pullback $(p \times W)^*(f\times W) \cong p^*f \times W$ is a weak equivalence in $\cM$, meaning $p^*f$ is a weak equivalence in $\cM_\Phi$. Finally, as $P_\Phi$ preserves pullbacks, $p^*f \times W$ is also the pullback in $\cM_\Phi$, hence we are done. 
\end{proof}

\begin{proposition} \label{prop:left proper}
		Let $\cM$ be a model category and $\Phi$ a model filter. Assume that for every $U$ in $\Phi$, $U \times -$ preserves pushouts. If $\cM$ is left proper, then $\cM_\Phi$ is left proper.
\end{proposition}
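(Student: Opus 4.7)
The plan is to mirror the proof of \cref{prop:right proper} for the dual setting, with the novel hypothesis about $U \times -$ preserving pushouts playing the role that was filled automatically (via products preserving limits) in the right proper case.

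First I would take a cofibration $i\colon A \to B$ and a weak equivalence $f\colon A \to C$ in $\cM_\Phi$, and reduce to common representatives in $\cM$ over a single subterminal $W \in \Phi$. Concretely, I would pick $W_1 \in \Phi$ such that $i \times W_1$ is a cofibration in $\cM$, pick $W_2 \in \Phi$ such that $f \times W_2$ is a weak equivalence in $\cM$, and then choose $W \leq W_1, W_2$ in $\Phi$. Then $i \times W$ is a cofibration and $f \times W$ is a weak equivalence in $\cM$, using that cofibrations and weak equivalences are $\Phi$-product stable.

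Next I would form the pushout $(i \times W)_*(f \times W)$ in $\cM$. Here the assumption that $W \times -$ preserves pushouts becomes essential: it yields a canonical isomorphism
\[ (i \times W)_*(f \times W) \;\cong\; (i_* f) \times W, \]
where $i_* f$ denotes the pushout of $i$ along $f$ computed in $\cM$. By left properness of $\cM$, the left-hand side is a weak equivalence in $\cM$, hence so is $(i_* f) \times W$. By \cref{not:s phi}, this exactly says that $i_* f$ lies in $\cW_\Phi$, i.e., is a weak equivalence in $\cM_\Phi$.

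Finally, I would invoke \cref{thm:filter quotient projection} to observe that $P_\Phi$ preserves finite colimits, so $i_* f$ is in fact the pushout of $i$ along $f$ computed in $\cM_\Phi$, completing the verification of left properness. The only potential obstacle is the isomorphism $(i \times W)_*(f \times W) \cong (i_* f) \times W$, which is precisely what the additional hypothesis is designed to supply; without it, left properness could fail, since in general Cartesian product does not commute with pushouts. Everything else is a routine transcription of the right proper argument.
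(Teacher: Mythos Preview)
Your proposal is correct and follows essentially the same approach as the paper. The paper's proof is a one-line reference back to \cref{prop:right proper}, noting only that one uses the fact that $P_\Phi$ and $-\times U$ preserve pushouts; you have simply spelled out those steps explicitly, with the same choice of $W\leq W_1,W_2$, the same key isomorphism $(i\times W)_*(f\times W)\cong (i_*f)\times W$ supplied by the extra hypothesis, and the same appeal to $P_\Phi$ preserving finite colimits.
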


\begin{proof}
	We follow the same steps as in \cref{prop:right proper}, using the fact that $P_\Phi$ and $- \times U$ preserve pushouts.
\end{proof}

We now have the following immediate corollary.

\begin{corollary} \label{cor:proper}
	Let $\cM$ be a model category and $\Phi$ a model filter. Assume that for every $U$ in $\Phi$, $U \times -$ preserves pushouts. If $\cM$ is proper, then $\cM_\Phi$ is proper.
\end{corollary}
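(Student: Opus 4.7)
The plan is to observe that this corollary is an immediate consequence of the two preceding propositions, since by definition a model category is proper precisely when it is both left proper and right proper. So the strategy is simply to invoke \cref{prop:right proper} and \cref{prop:left proper} simultaneously.

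More concretely, I would proceed as follows. Assume $\cM$ is proper, i.e., both left and right proper, and that $\Phi$ is a model filter such that $U \times -$ preserves pushouts for every $U \in \Phi$. By \cref{prop:right proper}, the right properness of $\cM$ transfers to $\cM_\Phi$ without needing the pushout-preservation hypothesis. By \cref{prop:left proper}, and using precisely the hypothesis that $U \times -$ preserves pushouts for all $U \in \Phi$, the left properness of $\cM$ transfers to $\cM_\Phi$. Combining the two yields that $\cM_\Phi$ is proper.

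Since both cited results have already been established, there is no substantive obstacle here; the corollary is genuinely a one-line consequence. The only thing worth noting is that the pushout-preservation hypothesis on $U \times -$ is used only in the left properness half of the argument, mirroring the asymmetry in the statements of \cref{prop:right proper} and \cref{prop:left proper}.
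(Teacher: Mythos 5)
Your proposal is correct and matches the paper exactly: the corollary is stated there as an immediate consequence of \cref{prop:right proper} and \cref{prop:left proper}, with no further argument needed. Your observation that the pushout-preservation hypothesis is only needed for the left-properness half is also consistent with the asymmetry in the paper's two propositions.
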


\subsection{Enriched Model Structures} \label{subsec:enriched model structures}
We now want to study the interaction between filter quotients and enriched model structures. Let $\cM$ be a compactly enriched model category and $\Phi$ a model filter. We want to construct a compactly enriched model structure on the model category $\cM_\Phi$. Constructing the enriched mapping object on $\cM_\Phi$ is straightforward. 

\begin{definition} \label{def:mapphi}
	Let $(\V,\otimes_\V,I_\V)$ be a monoidal model category with filtered colimits. Let $\C$ be a $\V$-enriched category and $\Phi$ a filter of subterminal objects. We define the \emph{induced enriched mapping object}, 
	\[\Map_{\C_\Phi}(-,-)\colon \C_\Phi^{op} \times \C_\Phi \to \V, \]
	by $\Map_{\C_\Phi}(X,Y) = \colim_{U \in \Phi} \Map_{\C}(X \times U, Y \times U)$.
\end{definition}

\begin{lemma} \label{lemma:mapphi}
		Let $(\V,\otimes_\V,I_\V)$ be a monoidal model category with filtered colimits, such that the functor $ - \otimes_\V -$ commutes with filtered colimits. Let $\C$ be a $\V$-enriched category and $\Phi$ a filter of subterminal objects. The induced enriched mapping object $\Map_{\C_\Phi}(X,Y)$ makes $\C_\Phi$ a $\V$-enriched category.
\end{lemma}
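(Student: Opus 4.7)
The plan is to assemble the unit and composition morphisms for the $\V$-enrichment of $\cM_\Phi$ out of the corresponding structure in $\cM$ by taking filtered colimits over $\Phi^{op}$, and then to verify the coherence axioms levelwise before passing to the colimit. Throughout, the standing assumption that $-\otimes_\V -$ commutes with filtered colimits in each variable is what lets each coherence axiom for $\cM_\Phi$ reduce to the corresponding axiom for $\cM$.

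For the unit, \cref{rem:terminal} guarantees that the terminal object $1$ lies in $\Phi$, so the identity of the $\V$-enrichment of $\cM$ supplies a map $I_\V \to \Map_\cM(X \times 1, X \times 1) = \Map_\cM(X, X)$, which I would then compose with the colimit inclusion into $\Map_{\cM_\Phi}(X, X)$. More invariantly, the identity in each $\Map_\cM(X \times U, X \times U)$ is compatible with the transition maps $-\times W$, so these assemble into a single morphism $I_\V \to \colim_{U \in \Phi^{op}} \Map_\cM(X \times U, X \times U)$.

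For composition, I would use the hypothesis on $\otimes_\V$ to rewrite
\[
\Map_{\cM_\Phi}(X, Y) \otimes_\V \Map_{\cM_\Phi}(Y, Z) \;\cong\; \colim_{(U, V) \in \Phi^{op} \times \Phi^{op}} \Map_\cM(X \times U, Y \times U) \otimes_\V \Map_\cM(Y \times V, Z \times V).
\]
Because $\Phi$ is a filter, any pair $(U, V)$ admits a common lower bound $W \le U, V$, so the diagonal $\Phi^{op} \to \Phi^{op} \times \Phi^{op}$ is cofinal, and the colimit simplifies to $\colim_{W \in \Phi^{op}} \Map_\cM(X \times W, Y \times W) \otimes_\V \Map_\cM(Y \times W, Z \times W)$. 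Composition in the $\V$-enrichment of $\cM$ supplies a compatible family of morphisms from each summand into $\Map_\cM(X \times W, Z \times W)$, and its colimit is the desired composition into $\Map_{\cM_\Phi}(X, Z)$.

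Associativity and unitality then reduce to the corresponding axioms of the $\V$-enrichment of $\cM$ at each level $W$. For associativity, the triple tensor $\Map_{\cM_\Phi}(W, X) \otimes_\V \Map_{\cM_\Phi}(X, Y) \otimes_\V \Map_{\cM_\Phi}(Y, Z)$ is identified with a colimit of triple tensors in $\cM$, iterating cofinality of the diagonal $\Phi^{op} \to (\Phi^{op})^{\times 3}$, and the two associator paths agree there by the associativity axiom for $\cM$; the unit triangles are handled analogously. I expect the main obstacle to be purely bookkeeping: one has to keep track of several iterated filtered colimits and cofinality arguments, but no genuine mathematical difficulty arises because the hypothesis on $\otimes_\V$ makes all iterated tensor products commute with colimits over $\Phi^{op}$, collapsing the verification of each axiom to its analogue in $\cM$.
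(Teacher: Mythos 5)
Your proposal is correct and follows essentially the same route as the paper: the paper likewise defines composition by commuting $-\otimes_\V-$ past the filtered colimits to rewrite $\Map_{\cM_\Phi}(X,Y)\otimes_\V\Map_{\cM_\Phi}(Y,Z)$ as a single colimit of levelwise tensors and then applies $\colim_U \comp_{X\times U, Y\times U, Z\times U}$, with associativity and unitality inherited levelwise. Your explicit cofinality argument for the diagonal $\Phi^{op}\to\Phi^{op}\times\Phi^{op}$ and the explicit construction of the unit are just more detailed renderings of steps the paper leaves implicit.
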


\begin{proof}
	Let us denote by $\comp_{X,Y,Z}\colon	\Map_{\C}(X,Y) \times \Map_{\C}(Y,Z) \to \Map_{\C}(X,Z)$ the composition map of $\C$. Now, for three objects $X, Y, Z$ in $\C_\Phi$, we have the following map 
	\begin{align*}
		\Map_{\C_\Phi}(X,Y) \otimes_\V \Map_{\C_\Phi}(Y,Z) & =  \underset{U \in \Phi}{\colim} \ \Map_{\C}(X \times U,Y \times U) \otimes_\V \underset{U \in \Phi}{\colim} \ \Map_{\C}(Y \times U,Z \times U) \\
		& \cong   \underset{U \in \Phi}{\colim} \ \Map_{\C}(X \times U,Y \times U) \otimes_\V \Map_{\C}(Y \times U,Z \times U) \\
		& \xrightarrow{\underset{U \in \Phi}{\colim} \ \comp_{X\times U, Y \times U, Z \times U}}  \underset{U \in \Phi}{\colim} \ \Map_{\C}(X \times U,Z \times U) \\ 
		& =  \Map_{\C_\Phi}(X,Z).
	\end{align*}
	Here the isomorphism follows from the fact that filtered colimits commute with the monoidal structure. As each $\comp_{X\times U, Y \times U, Z \times U}$ is associative and unital, it follows that this composition map is	associative and unital, as well. 
\end{proof}

Of course we need to make sure that this enriched mapping object is compatible with the hom set in the filter quotient, which is the aim of the next lemma, the proof of which directly follows from the fact that compact objects commute with filtered colimits, and our description of hom sets in filter quotient categories (\cref{thm:filtered colimits}).

\begin{lemma} \label{lemma:compact unit}
	Let $(\V,\otimes_\V,I_\V)$ be a monoidal model category with filtered colimits, such that the unit $I_\V$ is compact. Then \[\Hom(I_\V,\Map_{\cM_\Phi}(X,Y)) = \Hom_{\cM_\Phi}(X,Y).\] 
\end{lemma}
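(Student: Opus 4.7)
The plan is to unwind the definition of $\Map_{\cM_\Phi}(X,Y)$ as a filtered colimit and then apply $\Hom_\V(I_\V,-)$, exploiting compactness of $I_\V$ to move the functor past the colimit. This reduces the claim to identifying $\colim_{U \in \Phi} \Hom_\cM(X \times U, Y \times U)$ with $\Hom_{\cM_\Phi}(X,Y)$, which is supplied by \cref{thm:filtered colimits}.

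In more detail, I would first apply $\Hom_\V(I_\V,-)$ to the defining formula $\Map_{\cM_\Phi}(X,Y) = \colim_{U \in \Phi} \Map_\cM(X \times U, Y \times U)$ from \cref{def:mapphi}. Since $I_\V$ is compact in $\V$ and $\Phi$ is filtered, the functor $\Hom_\V(I_\V,-)$ commutes with this colimit, giving
\[
\Hom_\V(I_\V,\Map_{\cM_\Phi}(X,Y)) \;\cong\; \colim_{U \in \Phi} \Hom_\V(I_\V,\Map_\cM(X \times U, Y \times U)).
\]
By the basic enrichment compatibility built into the definition of a compactly enriched structure, each term is canonically identified with $\Hom_\cM(X \times U, Y \times U)$, so the right-hand side collapses to $\colim_{U \in \Phi} \Hom_\cM(X \times U, Y \times U)$.

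Finally, I would appeal to \cref{thm:filtered colimits}, which exhibits $\cM_\Phi$ as a filtered colimit in $\cat$ of the diagram $U \mapsto \cM_U$ with $\Hom_{\cM_U}(X,Y) = \Hom_\cM(X \times U, Y \times U)$. Since all $\cM_U$ share the same object set, hom-sets in the colimit category are computed as filtered colimits of hom-sets, yielding $\Hom_{\cM_\Phi}(X,Y) = \colim_{U \in \Phi} \Hom_\cM(X \times U, Y \times U)$, and concatenating this with the preceding chain of isomorphisms gives the desired equality.

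The only real place where a reader might pause is a small bookkeeping check: the transition maps $(-\times W)$ used to form the colimit of mapping objects must correspond, under the identification $\Hom_\V(I_\V,\Map_\cM(-,-)) \cong \Hom_\cM(-,-)$, to the transition maps used in the colimit presentation of $\cM_\Phi$ in \cref{thm:filtered colimits}. This is immediate from the naturality of this identification in both variables, so no genuine obstacle arises and the argument is essentially a one-line invocation of compactness followed by \cref{thm:filtered colimits}.
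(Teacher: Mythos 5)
Your argument is correct and matches the paper's intended proof: the paper only remarks that the lemma ``directly follows from the fact that compact objects commute with filtered colimits,'' and your write-up is exactly the expansion of that remark — apply $\Hom_\V(I_\V,-)$ to the colimit in \cref{def:mapphi}, use compactness to commute it past the filtered colimit, identify each term via the enrichment axiom, and conclude with \cref{thm:filtered colimits}. No issues.
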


We now want to construct an enriched tensor and cotensor on $\cM_\Phi$. This requires additional conditions on the enrichment category $\V$ and the filter $\Phi$. 

\begin{definition} \label{def:quotient compatible}
	Let $(\V,\otimes_\V,I_\V)$ be a monoidal model category. We say $\V$ is \emph{quotient compatible} if it satisfies the following conditions:
	\begin{enumerate}[leftmargin=*]
		\item $\V$ has filtered colimits. 
		\item $- \otimes_\V -$	commutes with filtered colimits.
		\item The unit $I_\V$ is compact.
		\item (Trivial) fibrations are closed under filtered	colimits.
	\end{enumerate}
\end{definition}

The last condition might appear restrictive, however, it is satisfied in many relevant cases, as the following lemma demonstrates, which is proven in \cite[Lemma 7.4.1]{hovey1999modelcategories}.

\begin{lemma}\label{lemma:cofibrant generation}
	Let $\V$ be a monoidal model category with filtered colimits. If there is a set of generating (trivial) cofibrations with (co)domain compact objects, then (trivial) fibrations are closed under filtered	colimits. Hence, if $\V$ also has a compact unit and $- \otimes_\V -$	commutes with filtered colimits, then $\V$ is quotient compatible. 
\end{lemma}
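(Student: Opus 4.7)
The plan is to use the standard characterization of fibrations in a cofibrantly generated setting, combined with the compactness hypothesis on (co)domains of generators, to reduce closure under filtered colimits to a finite-stage lifting argument. The second sentence is then just a bookkeeping step checking each clause of \cref{def:quotient compatible}.

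Concretely, let $I$ be a set of generating trivial cofibrations with compact domains and codomains. Fibrations in $\V$ are precisely the morphisms with the right lifting property against $I$, so it suffices to check that given a filtered diagram of fibrations $\{f_\alpha\colon Y_\alpha \to X_\alpha\}$ in the arrow category, the colimit $f\colon Y \to X$ also has RLP against every $i\colon A \to B$ in $I$. Given a lifting square with top edge $u\colon A \to Y$ and bottom edge $v\colon B \to X$, compactness of $A$ and $B$ gives the identifications $\Hom_\V(A,Y) \cong \colim_\alpha \Hom_\V(A, Y_\alpha)$ and $\Hom_\V(B,X) \cong \colim_\alpha \Hom_\V(B,X_\alpha)$, so $u$ factors through some $u_\alpha\colon A \to Y_\alpha$ and $v$ factors through some $v_\beta\colon B \to X_\beta$. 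Passing to a common index above $\alpha$ and $\beta$ and again using compactness of $A$ (so that $\Hom_\V(A,X)$ is the filtered colimit of $\Hom_\V(A, X_\gamma)$), the two composites $A \to X_\gamma$ that agree after mapping to $X$ can be equalized at some further stage $\gamma'$. At that stage we have a genuine commutative square over the fibration $f_{\gamma'}$, which admits a lift $B \to Y_{\gamma'}$; composing with the colimit structure map $Y_{\gamma'} \to Y$ produces the desired lift of the original square.

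The case of trivial fibrations is identical, using a set of generating cofibrations with compact (co)domains in place of $I$. For the final sentence, conditions (1)--(3) of \cref{def:quotient compatible} are given in the hypothesis, and condition (4) is exactly the statement we just verified, so $\V$ is quotient compatible.

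The main obstacle is just the standard but slightly fiddly step of aligning the filtered indices so that the square commutes at a \emph{single} stage; this is where compactness of the domain (not merely the codomain) is essential, since one needs to equalize two parallel maps out of $A$ that only agree after passing to the colimit. Everything else is a direct unwinding of definitions.
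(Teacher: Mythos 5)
Your proof is correct and follows essentially the same route as the paper: factor the lifting square through a finite stage of the filtered diagram using compactness of the (co)domains of the generators, equalize the two composites out of $A$ at a further stage so the square genuinely commutes there, lift against the fibration at that stage, and push the lift forward to the colimit. Your write-up is, if anything, slightly more explicit than the paper's about the index-alignment step.
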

	
\begin{definition}\label{def:simplicial model filter}
	Let $(\V,\otimes_\V,I_\V)$ be a monoidal model category. Let $\cM$ be a $\V$-compactly enriched model category. A \emph{$\V$-enriched model filter} $\Phi$ is a model filter on $\cM$ such that for all objects $X$ in $\cM$, compact object $K$ in $\V$ and $U$ in $\Phi$, $(X \otimes K) \times U = (X \times U) \otimes K$.
\end{definition}

As we will see further below (for example \cref{prop:filter product enriched} and more explicitly \cref{ex:filter product sset}), the conditions of \cref{def:simplicial model filter} are in fact satisfied in many relevant cases.

\begin{remark} \label{rem:simplicial model filter}
	Note for a model filter to be $\V$-enriched depends solely on the underlying category of $\V$, and is independent of the model structure on $\V$.
\end{remark}

\begin{lemma}\label{lemma:induced co tensor}
	Let $(\V,\otimes_\V,I_\V)$ be a quotient compatible monoidal model category. Let $\cM$ be a $\V$-compactly enriched model category and $\Phi$ a  $\V$-enriched model filter. Then the $\V$-enriched category $\cM_\Phi$, with enrichment $\Map_\Phi(-,-)$ from \cref{def:mapphi}, is tensored and cotensored over $\V^{cmpt}$, and $P_\Phi\colon \cM \to \cM_\Phi$ preserves (co)tensors. 
\end{lemma}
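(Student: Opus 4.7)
The plan is to define the tensor and cotensor on $\cM_\Phi$ as the images under $P_\Phi$ of the corresponding structures in $\cM$---this is meaningful because $P_\Phi$ is the identity on objects---and then verify the two required natural isomorphisms. For fixed compact $K$, the functor $X \mapsto X \otimes K$ descends to a functor $\cM_\Phi \to \cM_\Phi$ by \cref{lem:induced functor}, using that the defining condition of a $\V$-enriched model filter is exactly $(X \otimes K) \times U = (X \times U) \otimes K$. The analogous statement for the cotensor fails on the nose, since a direct computation gives $(Y \times U)^K \cong Y^K \times U^K$ rather than $Y^K \times U$, so functoriality of $(-)^K$ on $\cM_\Phi$ will be deduced a posteriori from the adjunction via Yoneda.

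For $\Hom_{\cM_\Phi}(X \otimes K, Y) \cong \Hom_\V(K, \Map_{\cM_\Phi}(X,Y))$, I would expand the left as $\colim_{U \in \Phi} \Hom_\cM((X \otimes K) \times U, Y \times U)$, replace the source by $(X \times U) \otimes K$ using the $\V$-enriched model filter condition, apply the $\V$-level adjunction of $\cM$ term-wise in the colimit to land in $\colim_{U \in \Phi} \Hom_\V(K, \Map_\cM(X \times U, Y \times U))$, and finally pull $K$ outside the filtered colimit using its compactness to recover $\Hom_\V(K, \Map_{\cM_\Phi}(X,Y))$ via \cref{def:mapphi}.

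For the cotensor adjunction $\Hom_{\cM_\Phi}(X, Y^K) \cong \Hom_{\cM_\Phi}(X \otimes K, Y)$, the key observation is a subterminal simplification: whenever an object $A$ admits a morphism to a subterminal $U$, the projection $Z \times U \to Z$ induces a bijection $\Hom_\cM(A, Z \times U) \cong \Hom_\cM(A, Z)$, because the $U$-coordinate of any such map is uniquely determined. Applied with $A = X \times U$ and $Z = Y^K$, this removes the inner $\times U$ on the left; the $\cM$-level adjunction combined with the $\V$-enriched model filter condition then converts the expression into $\colim_{U \in \Phi} \Hom_\cM((X \otimes K) \times U, Y)$, and applying the same simplification in reverse with $A = (X \otimes K) \times U$ and $Z = Y$ recovers $\colim_{U \in \Phi} \Hom_\cM((X \otimes K) \times U, Y \times U) = \Hom_{\cM_\Phi}(X \otimes K, Y)$.

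Preservation of (co)tensors by $P_\Phi$ is then immediate from the construction, and naturality in $X$, $Y$, and $K$ is inherited stepwise from naturality of each of the constituent isomorphisms. The main obstacle I anticipate is precisely the asymmetric behaviour of the cotensor under products with subterminals noted above; the subterminal simplification is the small technical trick that bypasses it and lets the cotensor adjunction fall out without having to control $(Y \times U)^K$ directly.
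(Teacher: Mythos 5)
Your proof is correct and follows essentially the same route as the paper: both establish the two universal-property isomorphisms by expanding $\Hom_{\cM_\Phi}$ as a filtered colimit over $\Phi$, commuting $-\otimes K$ past $-\times U$ via the enriched-model-filter condition, applying the $\cM$-level (co)tensor adjunctions termwise, and pulling the compact object $K$ out of the filtered colimit. The paper sidesteps your ``subterminal simplification'' by working with the description $\Hom_{\cM_\Phi}(A,B)\cong\colim_{U\in\Phi}\Hom_{\cM}(A\times U,B)$ from the outset, but this is only a cosmetic difference.
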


\begin{proof}
	Let $K$ be a compact object in $\V$, and $X, Y$ objects in $\cM$. We prove $X \otimes K$ and $Y^K$ satisfy the universal properties of the (co)tensor, which also directly prove $P_\Phi$ preserves (co)tensors. We have the following two chains of natural bijections:
	\begin{align*}
		\Hom_{\cM_\Phi}(X \otimes K, Y) & \cong \colim_{U \in \Phi}\Hom_{\cM}((X \otimes K) \times U, Y) & \text{\cref{thm:filtered colimits}}\\
		& \cong \colim_{U \in \Phi}\Hom_{\cM}((X \times U) \otimes K, Y)  & \text{\cref{def:simplicial model filter}} \\ 
		& \cong \colim_{U \in \Phi}\Hom_{\cM}((X \times U), Y^K) & \text{cotensor on } \cM\\ 
		& \cong \Hom_{\cM_\Phi}(X, Y^K) & \text{\cref{thm:filtered colimits}} 
	\end{align*}	

	and 
	\begin{align*}
		\Hom_{\cM_\Phi}(X \otimes K, Y) & \cong \colim_{U \in \Phi}\Hom_{\cM}((X \otimes K) \times U, Y) & \text{\cref{thm:filtered colimits}} \\ 
		& \cong \colim_{U \in \Phi}\Hom_{\cM}((X \times U) \otimes K, Y) & \text{\cref{def:simplicial model filter}} \\
		& \cong \colim_{U \in \Phi}\Hom_{\V}(K, \Map_{\cM}(X \times U,Y)) & \text{enriched mapping on } \cM\\
		& \cong \Hom_{\V}(K, \colim_{U \in \Phi} \Map_{\cM}(X \times U,Y)) & \text{K is compact}\\
		& \cong \Hom_{\V}(K, \Map_{\cM_\Phi}(X \times U,Y)) & \text{\cref{def:mapphi}}
	\end{align*}
\end{proof}

\begin{remark}
	Notice from the proof, the natural isomorphism between tensor and cotensor holds in fact for all objects in $\V$.
\end{remark}

We can now prove that the filter quotient construction does preserve compactly enriched model structures.

\begin{theorem} \label{thm:enriched filter quotient}
	Let $(\V,\otimes_\V,I_\V)$ be a quotient compatible monoidal model category. Let $\cM$ be a $\V$-compactly enriched model category and $\Phi$ a $\V$-enriched model filter. Then $\cM_\Phi$ is a $\V$-compactly enriched model category. Moreover, $P_\Phi$ preserves (co)tensors.
\end{theorem}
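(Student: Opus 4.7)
The plan is to assemble the preceding lemmas into the three conditions of \cref{def:enriched	model structure}. Condition (1), the $\V$-enrichment with the correct underlying hom, is given by \cref{lemma:mapphi} together with \cref{lemma:compact unit}. Condition (2), the (co)tensor adjunctions, together with the claim that $P_\Phi$ preserves them, is exactly \cref{lemma:induced co tensor}. Thus the substantive content of the theorem reduces to verifying condition (3): for every cofibration $i\colon A \to B$ and fibration $p\colon Y \to X$ in $\cM_\Phi$, the induced pullback-cotensor map
\[
\Map_{\cM_\Phi}(B,Y) \longrightarrow \Map_{\cM_\Phi}(B,X) \times_{\Map_{\cM_\Phi}(A,X)} \Map_{\cM_\Phi}(A,Y)
\]
is a fibration in $\V$, which is moreover trivial when either $i$ or $p$ is.

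The strategy is to exhibit this map as a filtered colimit of the analogous maps in $\cM$, and then invoke the closure condition (4) of \cref{def:quotient compatible}. Pick witnesses $W_1,W_2 \in \Phi$ such that $i \times W_1$ is a cofibration and $p \times W_2$ is a fibration in $\cM$ (using that $\Phi$ is a model filter and that fibrations are always $\Phi$-product stable). Since $\Phi$ is filtered, the subposet $\{U \in \Phi : U \leq W_1, U \leq W_2\}$ is cofinal, so by \cref{def:mapphi} each of the four mapping objects above may be computed as a colimit over this subposet. Using that filtered colimits in $\V$ commute with finite limits, the pullback commutes with the colimit, and the displayed map is isomorphic to
\[
\underset{U}{\colim}\ \Bigl[\Map_{\cM}(B \times U, Y \times U) \to \Map_{\cM}(B \times U, X \times U) \times_{\Map_{\cM}(A \times U, X \times U)} \Map_{\cM}(A \times U, Y \times U)\Bigr].
\]
For every such $U$, the bracketed map is a fibration by the pushout-product axiom in $\cM$ applied to the cofibration $i \times U$ and the fibration $p \times U$; condition (4) of \cref{def:quotient compatible} then ensures that the filtered colimit is still a fibration in $\V$. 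The trivial case is handled in the same way: further restrict the index to those $U$ lying below a chosen witness of triviality of $i \times U$ or $p \times U$, and then use the closure of trivial fibrations under filtered colimits.

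The main technical point is the commutation of the finite pullback with the filtered colimit inside $\V$; this is a standard property of the convenient monoidal model categories in play (and holds automatically in the motivating case $\V = \sset^{Kan}$, which is the one the theorem specializes to in \cref{def:simplicial model structure}). All other steps are routine bookkeeping of witnesses in $\Phi$, entirely parallel to the arguments used in the proof of \cref{thm:filter quotient model structure}.
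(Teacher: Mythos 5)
Your proposal is correct and follows essentially the same route as the paper: reduce to axiom (3) of \cref{def:enriched	model structure} via \cref{lemma:mapphi}, \cref{lemma:compact unit}, and \cref{lemma:induced co tensor}, express the pullback-cotensor map as a filtered colimit over a cofinal subposet of witnesses using commutation of filtered colimits with finite limits, and conclude with condition (4) of \cref{def:quotient compatible}. The only cosmetic difference is that you track two witnesses $W_1,W_2$ where the paper takes a single common refinement $U$.
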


\begin{proof}
	We have already shown that $\cM_\Phi$ is enriched and (co)tensored, and the (co)tensor is preserved by $P_\Phi$ (\cref{lemma:induced co tensor}). We also proved that the mapping object is compatible with the underlying hom set (\cref{lemma:compact unit}). So, we only need to verify the third axiom of \cref{def:enriched model structure}. Let $i\colon A \to B$ be a cofibration and $p\colon Y \to X$ be a fibration in $\cM_\Phi$. Choose a $U$ in $\Phi$, such that $i \times U$ is a cofibration in $\cM$ and $p \times U$ is a fibration in $\cM$. By assumption, the induced map $((i \times U)^*, (p \times U)_*)$ is a fibration, which is trivial if either $i$ or $p$ is trivial. 
	
	As filtered colimits commute with finite limits, $([i]^*,[p]_*) = \colim_{V \in \Phi}((i \times V)^*, (p \times V)_*)$. Now the inclusion from $\Phi_U$, the elements in $\Phi$ larger than $U$, into $\Phi$ is final, meaning this colimit coincides with the colimit  $\colim_{V \in \Phi_U}((i \times V)^*, (p \times V)_*)$. Finally, by assumption on $U$, every morphism in this colimit is a (trivial) fibration, which are stable under filtered colimits, by \cref{def:quotient compatible}. Hence, we are done.
\end{proof}

Let us consider some cases of interest. First, we observe that for many cofibrantly generated model categories, the conditions of \cref{def:quotient compatible} are straightforward.

\begin{corollary} \label{cor:cofibrantly generated}
	Let $\V$ be a Cartesian monoidal cofibrantly generated model structure, with (co)domains of generating (trivial) cofibrations compact and compact unit (meaning a compact terminal object). Then every enriched model filter $\Phi$ on a $\V$-enriched model category $\cM$ induces a $\V$-compactly enriched model structure $\cM_\Phi$. 
\end{corollary}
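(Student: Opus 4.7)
The plan is to show that under the hypotheses, the monoidal model category $\V$ satisfies the four conditions of \cref{def:quotient compatible} (quotient compatibility). Once this is verified, the statement follows immediately by applying \cref{thm:enriched filter quotient} to $\cM$ and $\Phi$.

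I would verify the four conditions in order. For condition (1), that $\V$ has filtered colimits: any cofibrantly generated model category is assumed cocomplete (this is implicit in having the small object argument produce the required factorizations), so in particular filtered colimits exist. For condition (3), that the unit $I_\V$ is compact: since $\V$ is Cartesian monoidal, the monoidal unit coincides with the terminal object, which is compact by hypothesis. For condition (4), that (trivial) fibrations are closed under filtered colimits: this is precisely the content of \cref{lemma:cofibrant generation}, whose hypothesis is that there is a generating set of (trivial) cofibrations with compact (co)domains, exactly what is assumed.

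The only slightly subtle step is condition (2), that $- \otimes_\V -$ commutes with filtered colimits. Since $\V$ is Cartesian monoidal, the tensor is the categorical product $\times$; the preservation of filtered colimits by $X \times -$ follows because as a Cartesian monoidal model category with the pushout-product axiom, $\V$ comes equipped with an internal hom making $X \times -$ a left adjoint, and left adjoints preserve all colimits. (If the closedness of the monoidal structure were in doubt, one could instead argue that a cofibrantly generated category is locally presentable, in which category filtered colimits commute with finite limits, hence with binary products.)

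Having established all four conditions, $\V$ is quotient compatible, and \cref{thm:enriched filter quotient} applies directly to produce the $\V$-compactly enriched model structure on $\cM_\Phi$. The only real content of the corollary is therefore the translation from the concrete cofibrant-generation hypotheses to the closure of fibrations under filtered colimits via \cref{lemma:cofibrant generation}; everything else is a direct unpacking of definitions.
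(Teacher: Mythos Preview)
Your proposal is correct and follows essentially the same route as the paper: verify that $\V$ is quotient compatible (\cref{def:quotient compatible}) using \cref{lemma:cofibrant generation} for condition~(4) and the commutation of binary products with filtered colimits for condition~(2), then invoke \cref{thm:enriched filter quotient}. The paper's proof is simply a one-line version of what you spelled out. One small caution: your parenthetical alternative for condition~(2), that ``a cofibrantly generated category is locally presentable,'' is not true in general (that is the additional content of \emph{combinatorial}); your primary argument via closedness of the Cartesian monoidal model structure is the sound one.
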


\begin{proof}
 By \cref{lemma:cofibrant generation} and the fact that binary products commute with filtered colimits,  $\V$ is in fact quotient compatible, so the result follows from \cref{thm:enriched filter quotient}.
\end{proof}

We can also reduce the condition on enriched filters.

\begin{lemma} \label{lemma:enriched filter presheaf}
	Let $\V = \Fun(\C^{op},\set)$ be a presheaf category, considered as a Cartesian monoidal category. Let $\cM$ be a $\V$-enriched model category and $\Phi$ a model filter, such that $- \times U\colon \cM \to \cM$ preserves finite colimits. Then the following are equivalent:
	\begin{enumerate}[leftmargin=*]
		\item $\Phi$ is an enriched model filter.
  \item For every $c$ in $\C$, $(X \otimes \Hom_\C(-,c)) \times U = (X \times U) \otimes \Hom_\C(-,c)$.
	\end{enumerate}
\end{lemma}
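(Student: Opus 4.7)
My plan would be to identify the two expressions as values of two finite-colimit preserving functors $\V^{cmpt} \to \cM$, and then to exploit the fact that in a presheaf category representables generate the compact objects under finite colimits (up to retracts).

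For the forward direction, I would first observe that every representable $\Hom_\C(-,c)$ is compact in $\V = \Fun(\C^{op},\set)$: by Yoneda, $\Hom_\V(\Hom_\C(-,c),-)$ is evaluation at $c$, and colimits in $\V$ are computed pointwise, so evaluation preserves all colimits and in particular filtered ones. Hence representables are compact, and applying the defining condition of an enriched model filter with $K = \Hom_\C(-,c)$ is exactly the statement in (2).

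For the reverse direction, I would fix $X$ in $\cM$ and $U$ in $\Phi$ and introduce the two functors $F, G \colon \V^{cmpt} \to \cM$ given by $F(K) = (X \otimes K) \times U$ and $G(K) = (X \times U) \otimes K$. The key step is to show both preserve finite colimits. The compact enrichment supplies the adjunction $\Hom_\cM(X \otimes K, Y) \cong \Hom_\V(K, \Map_\cM(X,Y))$, so $X \otimes -$ and $(X \times U) \otimes -$ preserve any colimits of compact objects whose colimit is again compact; since compact objects are closed under finite colimits in $\V$, this covers all finite colimits in $\V^{cmpt}$. Composing with $- \times U$, which preserves finite colimits by hypothesis, shows $F$ preserves them as well. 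By (2), $F$ and $G$ agree on every representable, naturally in $c$. Since every compact object of a presheaf category is a retract of a finite colimit of representables, and such a retract can itself be realized as a finite colimit in $\cM$ (for instance, as the coequalizer of the splitting idempotent with the identity, using the finite cocompleteness of $\cM$), the natural isomorphism extends from representables to all compact objects, yielding (1).

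The main obstacle I anticipate is tracking naturality through the retract/idempotent step cleanly, so that the resulting comparison on an arbitrary compact $K$ is canonical rather than choice-dependent. However, because the tensor is defined by a universal property, any comparison matching the defining adjunction data is unique and automatically natural, so the propagation from representables through finite colimits and retracts is routine. The real work is thus in setting up the two functors correctly and verifying their finite-colimit preservation within the compactly enriched framework, where tensors are only defined on $\V^{cmpt}$ rather than on all of $\V$.
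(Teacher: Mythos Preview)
Your proposal is correct and follows essentially the same strategy as the paper: define the two functors $(X \otimes -) \times U$ and $(X \times U) \otimes -$ on $\V^{cmpt}$, verify they preserve finite colimits, and conclude they coincide because a finite-colimit-preserving functor out of $\V^{cmpt}$ is determined by its values on representables. Your treatment is more detailed than the paper's one-line argument---in particular you are careful about the retract step (compact presheaves being retracts of finite colimits of representables, and retracts being expressible as coequalizers), which the paper absorbs into the phrase ``uniquely determined by its values on the representables.''
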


\begin{proof}
	By assumption, the two functors $(X \otimes -) \times U$, $(X \times U) \otimes -$ preserve finite colimits. The equivalence now follows from the fact that a functor from $\V^{cmpt}$ to $\cM$ that preserves finite colimits is uniquely determined by its values on the representables $\Hom_\C(-,c)$. 
\end{proof}

We can now apply the general result to two examples of interest. First of all, following common convention, if a model filter is enriched over any model structure on $\sset$, such as the Kan or Joyal model structure (\cref{rem:simplicial model filter}), then we	call it a \emph{simplicial model filter}.

\begin{corollary} \label{cor:simplicial	model category}
	Let $\cM$ be a simplicial model category and $\Phi$ a simplicial model filter. Then $\cM_\Phi$ is a simplicial model category.
\end{corollary}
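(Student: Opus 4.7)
The plan is to derive this as a direct application of Corollary \ref{cor:cofibrantly generated} to the case $\V = \sset^{Kan}$. First I would unpack the definitions: by Definition \ref{def:simplicial model structure}, a simplicial model category is precisely an $\sset^{Kan}$-compactly enriched model category, and by the convention fixed immediately before the statement, a simplicial model filter is precisely an $\sset^{Kan}$-enriched model filter in the sense of Definition \ref{def:simplicial model filter}. So the corollary reduces to checking that $\sset^{Kan}$ meets the hypotheses of Corollary \ref{cor:cofibrantly generated}.

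Next I would verify those hypotheses. The Kan model structure is Cartesian monoidal, and it is cofibrantly generated with generating cofibrations the boundary inclusions $\partial\Delta^n \hookrightarrow \Delta^n$ and generating trivial cofibrations the horn inclusions $\Lambda^n_i \hookrightarrow \Delta^n$. All of these (co)domains are finite simplicial sets, hence finite colimits of representables, hence compact in the sense used above. The terminal (and monoidal unit) object is $\Delta^0$, which is itself representable and therefore compact. With these checks in place, Corollary \ref{cor:cofibrantly generated} applies.

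Applying it yields that $\cM_\Phi$ is an $\sset^{Kan}$-compactly enriched model category, which by Definition \ref{def:simplicial model structure} is exactly the assertion that $\cM_\Phi$ is a simplicial model category. Since every step is a direct verification against the already-established infrastructure (Theorem \ref{thm:enriched filter quotient}, Lemma \ref{lemma:cofibrant generation}, and the elementary properties of $\sset^{Kan}$), I do not expect any real obstacle; the only mildly nontrivial observation is that the (co)domains of the standard generating (trivial) cofibrations in $\sset^{Kan}$ are genuinely compact, which is immediate from the description of compact objects in $\sset$ as finite simplicial sets.
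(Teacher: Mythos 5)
Your proposal is correct and follows exactly the same route as the paper: both reduce the statement to \cref{cor:cofibrantly generated} applied to $\V = \sset^{Kan}$, checking that the (co)domains of the boundary and horn inclusions are finite, hence compact, simplicial sets and that the terminal object $\Delta^0$ is compact. No differences worth noting.
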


\begin{proof}
	 The Kan model structure on $\sset$ is cofibrantly generated with (co)domains of generating (trivial) cofibrations compact (horn and boundary inclusions) and compact terminal object $(\Delta^0)$.	Hence, by \cref{cor:cofibrantly generated}, $\cM_\Phi$	is a simplicial model category. 
\end{proof}

Let us consider a second example of interest, the proof of which follows similarly to \cref{cor:simplicial model category}

\begin{corollary} \label{cor:model category enriched joyal}
	Let $\cM$ be a model category enriched over the Joyal model structure and $\Phi$ a simplicial model filter. Then the model category $\cM_\Phi$ is enriched over the Joyal model structure.
\end{corollary}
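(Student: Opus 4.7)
The plan is to apply \cref{cor:cofibrantly generated} to $\V = \sset^{Joy}$, in direct analogy with the proof of \cref{cor:simplicial model category}. To do so, I need to verify the four hypotheses on the Joyal model structure: that it is Cartesian monoidal, cofibrantly generated, has compact (co)domains of generating (trivial) cofibrations, and has a compact terminal object. Three of these are immediate: the Cartesian monoidal structure on $\sset$ and the terminal object $\Delta^0$ are independent of which model structure we put on $\sset$, so they carry over from the Kan case; and the generating cofibrations can still be taken to be the boundary inclusions $\partial \Delta^n \hookrightarrow \Delta^n$, since the Joyal cofibrations are precisely the monomorphisms, and these have compact (co)domains.

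The one nontrivial point is the existence of a set of generating trivial cofibrations with compact (co)domains. This is classical: Joyal's original construction exhibits such a set, built from the inner horn inclusions $\Lambda^n_k \hookrightarrow \Delta^n$ for $0 < k < n$, together with a small set of further cofibrations used to invert categorical equivalences (for instance involving the nerve $J$ of the walking isomorphism, or appropriate relative variants); all the simplicial sets appearing are finite, hence compact. I would cite Joyal's notes or Lurie's HTT for the precise set.

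Finally, I would observe that the notion of a \emph{simplicial model filter} depends only on the Cartesian monoidal structure on $\sset$ via the equality $(X \otimes K) \times U = (X \times U) \otimes K$ for compact $K$, and not on whether the Kan or Joyal model structure is used; in particular compactness in $\sset$ is the same in either case (namely, being a finite simplicial set). With the four hypotheses verified, \cref{cor:cofibrantly generated} applies directly and yields the desired $\sset^{Joy}$-compactly enriched model structure on $\cM_\Phi$. The main obstacle is just pinning down a literature reference for generating trivial cofibrations of $\sset^{Joy}$ with compact (co)domains, but this is standard.
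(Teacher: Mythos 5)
Your overall strategy is exactly the paper's: the paper proves this corollary by declaring that it ``follows similarly to'' \cref{cor:simplicial model category}, i.e.\ by feeding $\V = \sset^{Joy}$ into \cref{cor:cofibrantly generated}, and the routine hypotheses (Cartesian monoidal structure, compact terminal object $\Delta^0$, boundary inclusions as generating cofibrations) are checked just as you do.

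However, the one point you yourself single out as nontrivial is where your argument has a genuine gap. The nerve $J$ of the walking isomorphism is not a finite simplicial set --- it has nondegenerate simplices in every dimension --- so the maps you describe do not all have compact (co)domains. More seriously, it is not ``classical'' that the Joyal model structure admits a set of generating trivial cofibrations with finite (co)domains: Joyal's notes and Lurie's HTT both explicitly remark that no explicit set of generating trivial cofibrations is known, and the cofibrant generation they establish comes from the abstract combinatoriality machinery, which only produces generators with $\kappa$-presentable (co)domains for some uncountable $\kappa$. The inner horn inclusions $\Lambda^n_k \hookrightarrow \Delta^n$ generate only the inner anodyne maps, a strictly smaller class than the trivial cofibrations (they do not account for inverting equivalences), and no finite replacement for $\{0\} \to J$ generating the remainder is available off the shelf. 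So \cref{lemma:cofibrant generation} cannot be invoked as you propose. What is actually needed is that $\sset^{Joy}$ is quotient compatible in the sense of \cref{def:quotient compatible}, i.e.\ that Joyal (trivial) fibrations are closed under filtered colimits. For trivial fibrations this is immediate (they coincide with the Kan trivial fibrations, detected by the finite boundary inclusions). For fibrations one must argue directly rather than through a finite generating set --- for instance using that a map between quasi-categories is a Joyal fibration iff it is an inner fibration (a lifting condition against finite objects) that lifts equivalences, and that being an equivalence is witnessed by finitely much data and hence already holds at a finite stage of a filtered colimit --- and one must check that this covers the particular filtered colimits of pullback-corner maps arising in the proof of \cref{thm:enriched filter quotient}. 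This is the step that needs to be supplied; as written, the literature reference you hope to ``pin down'' does not exist.
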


\subsection{Quillen Adjunctions and Equivalences} \label{subsec:quillen adjunctions}
Finally, we observe that filter quotients preserve Quillen adjunctions and Quillen equivalences. Here we rely on \cref{def:quillen adjunction,def:quillen equivalence}.

\begin{proposition} \label{prop:quillen adjunction}
	Let $\cM, \cN$ be model categories. Let $\Phi_\cM$ be a model filter on $\cM$ and $\Phi_\cN$ a model filter on $\cN$. Let 
	\[
		\begin{tikzcd}[column sep = 1cm]
			\cM \arrow[r, "F", shift left = 1.8] \arrow[r, "\bot", leftarrow, shift right = 1.8, "G"'] & \cN 
		\end{tikzcd}
	\]
	be a Quillen adjunction, such that for all $X$ in $\cM$ and $U$ in $\Phi_\cM$, $F(X \times U) = F(X) \times F(U)$, $F(U)$ in $\Phi_{\cN}$, and for all $V$ in $\Phi_{\cN}$, $G(V)$ in $\Phi_{\cM}$. Then, this induces a Quillen adjunction  
	\[
		\begin{tikzcd}[column sep = 1cm]
			\cM_{\Phi_\cM} \arrow[r, "F", shift left = 1.8] \arrow[r, "\bot", leftarrow, shift right = 1.8, "G"'] & \cN_{\Phi_\cN} 
		\end{tikzcd}.
	\]
	Moreover, if the original adjunction $(F,G)$ is a homotopical (co)localization or Quillen equivalence, then so is the induced adjunction. 
\end{proposition}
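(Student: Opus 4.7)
The plan is to reduce everything to statements in $\cM$ and $\cN$. The underlying adjunction $F \dashv G$ between $\cM_{\Phi_\cM}$ and $\cN_{\Phi_\cN}$ is provided immediately by \cref{cor:induced adjunction}, whose hypotheses are exactly those assumed here. To upgrade this to a Quillen adjunction, I would verify that the induced $F$ descends to a left Quillen functor. Given a (trivial) cofibration $[i]$ in $\cM_{\Phi_\cM}$, choose $U \in \Phi_\cM$ with $i \times U$ a (trivial) cofibration in $\cM$. Applying the left Quillen functor $F$ and using the hypothesis $F(X \times U) = F(X) \times F(U)$ naturally in $X$, one finds that $F(i) \times F(U)$ is a (trivial) cofibration in $\cN$. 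Since $F(U) \in \Phi_\cN$, this exhibits $F(i)$ as a (trivial) cofibration in $\cN_{\Phi_\cN}$.

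For the Quillen equivalence and (co)localization statements, I would use the derived-unit/counit characterization. Given any object $X$ of $\cM$, factor $\emptyset \to X$ in $\cM$ as $\emptyset \to QX \to X$ with $QX$ cofibrant and the right map a trivial fibration; by \cref{rem:s phi}~(3) the same factorization serves as a cofibrant replacement of $X$ in $\cM_{\Phi_\cM}$. Similarly, a fibrant replacement $FQX \to RFQX$ in $\cN$ remains a fibrant replacement in $\cN_{\Phi_\cN}$. Thus the derived unit of the induced adjunction at $X$ is represented by the derived unit $QX \to G(RFQX)$ of the original adjunction. In the Quillen equivalence or homotopical colocalization cases this is a weak equivalence in $\cM$, hence, applying $P_{\Phi_\cM}$, a weak equivalence in $\cM_{\Phi_\cM}$. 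The symmetric argument on the derived counit handles homotopical localization and the remaining half of Quillen equivalence.

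The principal subtlety is that cofibrant (resp.\ fibrant) objects in $\cM_{\Phi_\cM}$ (resp.\ $\cN_{\Phi_\cN}$) need not themselves be cofibrant (resp.\ fibrant) in $\cM$ (resp.\ $\cN$), which a priori threatens the identification of derived units and counits with their counterparts in the originals. This concern is resolved by \cref{rem:s phi}~(3): because $P_\Phi$ preserves all three classes of morphisms, any (co)fibrant replacement performed in $\cM$ or $\cN$ projects to a (co)fibrant replacement in the respective filter quotient, and any factorization in $\cM$ projects to a factorization in $\cM_{\Phi_\cM}$, as used implicitly in the factorization axiom of \cref{thm:filter quotient model structure}. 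Once this observation is in place the rest of the argument is purely formal.
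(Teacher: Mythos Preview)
Your proposal is correct and follows essentially the same route as the paper: invoke \cref{cor:induced adjunction} for the adjunction, check $F$ is left Quillen by lifting along a suitable $U\in\Phi_\cM$ and using $F(i\times U)=F(i)\times F(U)$, and then observe that the derived (co)unit in the filter quotient is the $P_\Phi$-image of the derived (co)unit in $\cM$ (resp.\ $\cN$), hence a weak equivalence. Your treatment of the last step is in fact more explicit than the paper's, which simply asserts that the derived (co)unit ``remains an equivalence'' since $P_\Phi$ preserves weak equivalences; your care with (co)fibrant replacements via \cref{rem:s phi}(3) is exactly what justifies that one-line claim.
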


\begin{proof}
	The existence of the adjunction is a direct implication of \cref{cor:induced adjunction}. We show that $F_\Phi$ is left Quillen. Let $i\colon	A \to B$ be a (trivial) cofibration in $\cM_\Phi$. This means there exists $U$ in $\Phi_{\cM}$, such that $i \times U$ is a (trivial) cofibration in $\cM$. By assumption $F(i \times U) = F(i) \times U$ is a (trivial) cofibration, which implies that the class $F(i)$ is a (trivial) cofibration in $\cN$. Hence, $F_\Phi$ is left Quillen. 

	Now, if the original adjunction is a homotopical (co)localization or Quillen equivalence, then one of the derived unit or derived counit maps (or both) are equivalences, which remain equivalences in $\cM_{\Phi_\cM}, \cN_{\Phi_\cN}$, as the projection functors preserve equivalences.
\end{proof}

\begin{proposition} \label{prop:quillen adjunction enriched}
	Let $(\V,I_\V,\otimes_\V)$ be a quotient compatible monoidal model category, $\cM, \cN$ two $\V$-compactly enriched model categories, and $\Phi_\cM$ ($\Phi_\cN$) a $\V$-enriched model filter on $\cM$ ($\cN$). Let 
	\[
		\begin{tikzcd}
			\cM \arrow[r, "F", shift left = 1.8] \arrow[r, "\bot", leftarrow, shift right = 1.8, "G"'] & \cN 
		\end{tikzcd}
	\]
	be a Quillen adjunction, such that the induced adjunction 
	\[
		\begin{tikzcd}
			\cM_{\Phi_\cM} \arrow[r, "F", shift left = 1.8] \arrow[r, "\bot", leftarrow, shift right = 1.8, "G"'] & \cN_{\Phi_\cN} 
		\end{tikzcd}
	\]
 exists. Then the induced adjunction is an enriched Quillen adjunction. 
	\end{proposition}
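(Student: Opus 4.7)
The goal is to promote the plain Quillen adjunction established in \cref{prop:quillen adjunction} to a $\V$-enriched one, i.e., to produce a natural isomorphism in $\V$
\[
\Map_{\cN_{\Phi_\cN}}(F_\Phi X, Y) \cong \Map_{\cM_{\Phi_\cM}}(X, G_\Phi Y).
\]
My plan is to reduce this to the enrichment of the original adjunction $F \dashv G$ by unwinding the filtered colimit formula for the mapping objects (\cref{def:mapphi}) and exploiting the fact that both $F_\Phi$ and $G_\Phi$ interact predictably with the $\V^{cmpt}$-(co)tensor on filter quotients constructed in \cref{lemma:induced co tensor}.

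First I would observe that since the hypothesized original adjunction is $\V$-enriched, $F$ preserves $\V^{cmpt}$-tensors up to natural isomorphism, $F(X\otimes K)\cong F(X)\otimes K$. Because tensors on $\cM_{\Phi_\cM}$ and $\cN_{\Phi_\cN}$ are (by \cref{lemma:induced co tensor}) preserved by the projection functors, this isomorphism transports to filter quotients: $F_\Phi(X\otimes K)\cong F_\Phi(X)\otimes K$. Dually, $G_\Phi$ respects $\V^{cmpt}$-cotensors. Then for any $K \in \V^{cmpt}$, I would assemble the chain of natural bijections
\begin{align*}
\Hom_\V(K, \Map_{\cN_{\Phi_\cN}}(F_\Phi X, Y))
&\cong \Hom_{\cN_{\Phi_\cN}}(F_\Phi X \otimes K, Y) \\
&\cong \Hom_{\cN_{\Phi_\cN}}(F_\Phi(X \otimes K), Y) \\
&\cong \Hom_{\cM_{\Phi_\cM}}(X \otimes K, G_\Phi Y) \\
&\cong \Hom_\V(K, \Map_{\cM_{\Phi_\cM}}(X, G_\Phi Y)),
\end{align*}
which combines \cref{lemma:induced co tensor} with the hom-set adjunction of \cref{cor:induced adjunction}.

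The main obstacle, where I expect most of the technical effort to go, is promoting an isomorphism natural only in compact $K$ to a bona fide isomorphism of objects in $\V$. In a compactly enriched setting only $\V^{cmpt}$-tensors exist, so the Yoneda argument above does not directly apply to arbitrary $K\in\V$. To bridge this gap, I would fall back on the explicit filtered colimit presentation
\[
\Map_{\cN_{\Phi_\cN}}(F_\Phi X, Y) \;=\; \underset{V \in \Phi_\cN}{\colim}\;\Map_{\cN}(FX \times V, Y \times V),
\]
and compare it term-by-term with the analogous colimit over $\Phi_\cM$ defining $\Map_{\cM_{\Phi_\cM}}(X, G_\Phi Y)$. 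The key inputs are: (i) the restricted functors $F\colon\Phi_\cM\to\Phi_\cN$ and $G\colon\Phi_\cN\to\Phi_\cM$ provided by the existence of the induced adjunction, which allow me to interleave the two cofiltered indexing systems; (ii) the product-preservation hypothesis $F(X\times U)=FX\times FU$, giving $\Map_\cN(F(X\times U),Y\times FU)$ inside the colimit; (iii) the enrichment of the original adjunction, which turns this into $\Map_\cM(X\times U, G(Y\times FU))$; and (iv) the fact that the right adjoint $G$ preserves products, so $G(Y\times FU)\cong GY\times GFU$, with $GFU\in\Phi_\cM$. A cofinality/reindexing argument between $\Phi_\cM$ and $\Phi_\cN$, together with the compact-unit and filtered-colimit-preservation axioms of quotient compatibility (\cref{def:quotient compatible}), should then identify the two filtered colimits, producing the required isomorphism in $\V$ and completing the proof.
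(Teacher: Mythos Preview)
Your first paragraph \emph{is} the paper's proof. The paper's argument consists entirely of the observation that $P_\Phi$ preserves tensors (\cref{thm:enriched filter quotient}), so that the $\V^{cmpt}$-tensor on $\cM_{\Phi_\cM}$ and $\cN_{\Phi_\cN}$ agrees on the nose with the original one; since the original left adjoint $F$ preserves tensors, so does $F_\Phi$, and the paper declares the result to ``follow directly'' from that. In other words, the paper invokes (implicitly) the standard principle that a left adjoint between tensored $\V$-categories is $\V$-enriched as soon as it preserves tensors, and stops.

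Everything from your ``main obstacle'' onward is extra work the paper does not do. Your worry---that a bijection $\Hom_\V(K,-)\cong\Hom_\V(K,-)$ natural only in compact $K$ need not yield an isomorphism of $\V$-objects---is a legitimate point of rigor in the compactly-tensored setting, and your proposed cure (compare the two filtered colimits defining $\Map_{\cN_{\Phi_\cN}}(F_\Phi X,Y)$ and $\Map_{\cM_{\Phi_\cM}}(X,G_\Phi Y)$ directly, using the enriched adjunction iso term-by-term plus cofinality between $\Phi_\cM$ and $\Phi_\cN$) would indeed produce the mapping-object isomorphism in $\V$ without any Yoneda step. This is a genuinely different, more hands-on route than the paper's one-line appeal to tensor preservation; it buys you an explicit description of the enriched adjunction iso at the level of the colimit formulas, at the cost of the reindexing bookkeeping. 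The paper's approach is shorter but leans on the reader accepting that tensor preservation suffices in this compactly-enriched framework.
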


	\begin{proof}
		We only need to observe that $F$ commutes with tensors. However, by \cref{thm:enriched filter quotient}, $P_\Phi$ preserves tensors, so the result follows directly. 
	\end{proof}

\section{From Model Categories to \texorpdfstring{$\infty$}{oo}-Categories} \label{sec:underlying infinity categories}
In this section we confirm that the construction coincides with the filter quotient construction developed in \cite{rasekh2021filterquotient}. Let us quickly review the main result. 

\begin{definition}
	An \emph{$\infty$-category} $\M$ is a category strictly enriched over the category of Kan complexes.
\end{definition}

\begin{definition}
	Let $\M$ be an $\infty$-category. A subterminal object is an object $X$, such that for all objects $Y$, the Kan complex $\Map_{\M}(X,Y)$ is homotopically subterminal (\cref{ex:subterminal sset}).
\end{definition}

We now have the following main result.

\begin{theorem}[{\cite[Proposition 2.2]{rasekh2021filterquotient}}] \label{thm:filter quotient infinity category}
	Let $\M$ be an $\infty$-category and $\Phi$ a filter of subterminal objects in $\M$. Then there exists an $\infty$-category $\M_\Phi$ with the same objects and with morphisms given by the filtered colimit 
	\[\Map_{\M_\Phi}(X,Y) = \colim_{U \in \Phi}\Map_{\M}(X \times U, Y \times U)\] 
\end{theorem}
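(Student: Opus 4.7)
The plan is to mimic the strategy of \cref{lemma:mapphi}, but now with the enrichment taken over Kan complexes rather than $\set$. Since $\infty$-categories are here defined as strict Kan-enriched categories, the goal is to produce a strict simplicial category $\M_\Phi$ whose hom-spaces are the indicated filtered colimits, and whose composition is inherited from $\M$.

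First, I would fix objects $X, Y$ in $\M$ and set up the filtered diagram $\Phi^{op} \to \sset$ sending $U \mapsto \Map_\M(X \times U, Y \times U)$. Because each $U \in \Phi$ is subterminal, if $V \leq U$ in $\Phi$, then $V \times U \simeq V$, so the map $V \to U$ yields compatible maps $X \times V \to X \times U$ and $Y \times V \to Y \times U$; precomposition on the source and the universal property of the product on the target induce transition maps $\Map_\M(X \times U, Y \times U) \to \Map_\M(X \times V, Y \times V)$. A direct check using that $\Phi$ is a filter shows this assembles into a filtered diagram, and its colimit is a Kan complex (since Kan complexes are closed under filtered colimits in $\sset$). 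We define this to be $\Map_{\M_\Phi}(X,Y)$.

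Second, I would define the composition. Using that filtered colimits in $\sset$ commute with finite products of Kan complexes, we identify
\[
\Map_{\M_\Phi}(X,Y) \times \Map_{\M_\Phi}(Y,Z) \;\cong\; \colim_{U \in \Phi}\; \Map_\M(X \times U, Y \times U) \times \Map_\M(Y \times U, Z \times U),
\]
and composing level-wise in $\M$ followed by passage to the colimit yields the required map to $\Map_{\M_\Phi}(X,Z)$. Associativity and unitality then follow from the corresponding properties in $\M$ together with the universal property of the filtered colimit; the identity on $X$ at stage $U = 1$ (which belongs to $\Phi$ by \cref{rem:terminal}) provides the unit.

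The main obstacle is verifying that the transition maps are strictly functorial in $\Phi^{op}$ and strictly compatible with composition, so that $\M_\Phi$ is an honest strict Kan-enriched category rather than only a homotopy-coherent one. This is exactly where the subterminal hypothesis on $\Phi$ is essential: since maps between subterminal objects are unique when they exist, the coherence data collapses, and the naively constructed transition maps automatically satisfy the required strict equalities. Once this coherence is in place, the universal property of filtered colimits promotes strict composition in $\M$ levelwise to strict composition on $\M_\Phi$, completing the construction.
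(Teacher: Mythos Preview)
The paper does not actually give a proof of this statement: it is quoted as \cite[Proposition 2.2]{rasekh2021filterquotient} and used as input for the comparison in \cref{sec:underlying infinity categories}. So there is no in-paper argument to compare against directly.

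That said, your proposal is correct and is precisely the specialization to $\V = \sset$ (with the Cartesian monoidal structure) of the argument the paper does spell out in \cref{lemma:mapphi}. There the enriched hom is defined as the filtered colimit $\colim_{U \in \Phi}\Map_\C(X\times U, Y\times U)$, composition is obtained by commuting the monoidal product with the filtered colimit and then applying the levelwise composition maps $\comp_{X\times U,Y\times U,Z\times U}$, and associativity and unitality are inherited from $\C$. Your additional observations---that Kan complexes are closed under filtered colimits, and that the uniqueness of maps between subterminal objects forces the transition maps to be strictly functorial---are exactly the points one needs beyond \cref{lemma:mapphi} to land in the Kan-enriched setting and to guarantee a strict simplicial category rather than a pseudo one. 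In short: your route is the same as the paper's general enriched argument, instantiated at Kan complexes, and nothing is missing.
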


Note, part of this result includes the fact that the ordinary colimit of Kan complexes is again a Kan complex.

\begin{definition}
	Let $\cM$ be a simplicial model category. The \emph{underlying $\infty$-category of $\cM$}, denoted $\Ho_\infty(\cM)$, is defined as the Kan enriched category of fibrant cofibrant objects in $\cM$.  
\end{definition}

\begin{lemma}
	Let $\cM$ be a simplicial model category and $\Phi$ a model filter, such that for all $U$ in $\Phi$, $U$ is cofibrant. Then $\Phi$ is a poset of subterminal objects in the underlying $\infty$-category $\Ho_\infty(\cM)$.
\end{lemma}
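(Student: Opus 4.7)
The statement breaks into two pieces: each $U \in \Phi$ is a subterminal object of $\Ho_\infty(\cM)$, and the ordering on $\Phi$ inherited from $\Subfib(\cM)$ agrees with the natural ordering on subterminal objects of $\Ho_\infty(\cM)$. Both pieces are translation problems, not construction problems, so the plan is to unwind the definitions and apply \cref{lemma:simplicial subterminal}.

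First, I would verify that each $U \in \Phi$ actually lies in $\Ho_\infty(\cM)$. Since $U$ is homotopically subterminal in $\cM$ it is fibrant by definition, and it is cofibrant by our standing hypothesis on $\Phi$. Hence $U$ is a fibrant-cofibrant object of $\cM$ and, as such, an object of the Kan-enriched subcategory $\Ho_\infty(\cM)$, with mapping Kan complexes computed by the simplicial enrichment of $\cM$.

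Second, I would check subterminality in $\Ho_\infty(\cM)$. Unwinding the definition of subterminal object in an $\infty$-category, I need $\Map_{\Ho_\infty(\cM)}(Y,U)$ to be homotopically subterminal for every $Y$ in $\Ho_\infty(\cM)$. But for such a $Y$, this mapping Kan complex is exactly the simplicial mapping object $\Map_\cM(Y,U)$ of $\cM$, and $Y$ is cofibrant while $U$ is fibrant and homotopically subterminal. This is precisely the hypothesis of \cref{lemma:simplicial subterminal}, whose conclusion is that $\Map_\cM(Y,U)$ is homotopically subterminal. Hence $U$ is subterminal in $\Ho_\infty(\cM)$.

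Third, I would match the poset orderings. On one side, $U \leq V$ in $\Phi \subseteq \Subfib(\cM)$ means $\Hom_\cM(U,V) \neq \emptyset$. On the other side, the induced order on subterminal objects in $\Ho_\infty(\cM)$ is that $U \leq V$ whenever $\Map_\cM(U,V)$ is non-empty, equivalently contractible (by the second step, this Kan complex is either empty or contractible). Since $U$ is cofibrant and $V$ is fibrant, honest hom-sets agree with $\pi_0$ of the mapping Kan complex, so both conditions coincide with the non-emptiness of $\Map_\cM(U,V)$, and the orderings agree. The main subtlety, such as it is, is simply recognizing that the standard identification of $\Map_\cM$ with the $\infty$-categorical mapping space on fibrant-cofibrant objects is exactly what bridges the model categorical and $\infty$-categorical sides; once that is noted, the lemma is an immediate corollary of \cref{lemma:simplicial subterminal}.
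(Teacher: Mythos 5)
Your proposal is correct and follows essentially the same route as the paper: check that each $U$ is fibrant--cofibrant and hence an object of $\Ho_\infty(\cM)$, then apply \cref{lemma:simplicial subterminal} to see that the mapping Kan complexes into $U$ are homotopically subterminal. Your additional third step verifying that the two poset orderings agree is a harmless (and slightly more careful) elaboration that the paper leaves implicit.
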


\begin{proof}
	Let $U$ be an object in $\Phi$. By assumption, $U$ is cofibrant and fibrant, meaning it is an object in $\Ho_\infty(\cM)$. Now, by \cref{lemma:simplicial subterminal}, $\Map_{\cM}(-, U)$ is a $(-1)$-truncated Kan complex, which means that $U$ is subterminal in $\Ho_\infty(\cM)$. 
\end{proof}

\begin{theorem}
 Let $\cM$ be a simplicial model category and $\Phi$ be a model filter with all elements cofibrant. Then the underlying $\infty$-category of the filter quotient model category is equivalent to the filter quotient $\infty$-category. 
\end{theorem}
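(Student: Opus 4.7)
The plan is to construct a comparison functor of Kan-enriched categories $F\colon \Ho_\infty(\cM)_\Phi \to \Ho_\infty(\cM_\Phi)$ and to verify it is essentially surjective and fully faithful. Both halves will reduce to computations using the enriched structure from \cref{thm:enriched filter quotient} together with the mapping-space formula of \cref{thm:filter quotient infinity category}.

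On objects, the $\infty$-categorical filter quotient leaves the object set unchanged, so objects of $\Ho_\infty(\cM)_\Phi$ are the fibrant-cofibrant objects of $\cM$. Each such object remains fibrant-cofibrant in $\cM_\Phi$: by \cref{rem:s phi}(3), $P_\Phi$ preserves (co)fibrations, while \cref{thm:filter quotient projection}(1) ensures it preserves the initial and terminal objects. Hence $F$ may be defined as the inclusion on objects. Essential surjectivity will follow from a fibrant-cofibrant replacement argument: any fibrant-cofibrant $X'$ in $\cM_\Phi$, viewed as an object of $\cM$, admits a zigzag of weak equivalences in $\cM$ to a fibrant-cofibrant replacement $X$ in $\cM$, and since $P_\Phi$ preserves weak equivalences, this zigzag exhibits $X'$ as equivalent to $F(X)$ in $\Ho_\infty(\cM_\Phi)$.

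For full faithfulness, I would fix $X, Y$ fibrant-cofibrant in $\cM$ and compare mapping spaces directly. On the one hand,
\[ \Map_{\Ho_\infty(\cM_\Phi)}(X,Y) = \Map_{\cM_\Phi}(X,Y) = \colim_{U \in \Phi} \Map_\cM(X \times U, Y \times U) \]
by \cref{def:mapphi}. On the other hand,
\[ \Map_{\Ho_\infty(\cM)_\Phi}(X,Y) = \colim_{U \in \Phi} \Map_{\Ho_\infty(\cM)}(X \times U, Y \times U) \]
by \cref{thm:filter quotient infinity category}. These colimits are over the same diagram once $\Map_{\Ho_\infty(\cM)}(X \times U, Y \times U)$ is identified with $\Map_\cM(X \times U, Y \times U)$. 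This identification requires $X \times U$ to be fibrant-cofibrant in $\cM$ and to represent the $\infty$-categorical product in $\Ho_\infty(\cM)$: fibrancy is automatic since both $X$ and $U$ are fibrant, while cofibrancy follows from cofibrancy of $U$, the $\Phi$-product stability of cofibrations, and preservation of the initial object under $-\times U$.

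The hardest part will be this last identification, ensuring that the categorical product $X \times U$ in $\cM$ indeed represents the product in $\Ho_\infty(\cM)$ and that the Kan-enriched comparison is coherent across the filtered colimit. The remaining ingredients are formal consequences of the preservation properties already established for $P_\Phi$ together with the explicit descriptions in \cref{def:mapphi} and \cref{thm:filter quotient infinity category}.
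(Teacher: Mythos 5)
Your proposal is correct in substance and rests on the same two computations as the paper's proof --- the mapping-space formula $\Map_{\cM_\Phi}(X,Y)=\colim_{U\in\Phi}\Map_\cM(X\times U,Y\times U)$ from \cref{def:mapphi} on one side and the analogous colimit defining the filter quotient $\infty$-category (\cref{thm:filter quotient infinity category}) on the other --- but you package it differently at the object level. The paper simply asserts that $\Ho_\infty(\cM_\Phi)$ and $\Ho_\infty(\cM)_\Phi$ have the same set of objects and the same mapping complexes, i.e.\ it treats the comparison as an identity of Kan-enriched categories. You instead build a comparison functor out of the inclusion of the fibrant--cofibrant objects of $\cM$ into those of $\cM_\Phi$ and prove essential surjectivity by fibrant--cofibrant replacement in $\cM$ followed by applying $P_\Phi$ to the replacement zigzag. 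This extra step is not wasted effort: an object can be fibrant--cofibrant in $\cM_\Phi$ (meaning $X\times U\to U$ is a fibration, etc., for some $U\in\Phi$) without being fibrant--cofibrant in $\cM$, so the two object sets need not literally coincide, and your argument is what actually justifies the word ``equivalent'' in the statement. Your identification of full faithfulness with the agreement of the two filtered colimits, hinging on $X\times U$ being a fibrant--cofibrant representative of the $\infty$-categorical product, is exactly the content the paper compresses into its citation of the three results.

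Two points to pin down. First, your cofibrancy argument for $X\times U$ invokes ``preservation of the initial object under $-\times U$,'' which is not among the hypotheses of the theorem: $\Phi$-product stability applied to $\emptyset\to X$ only yields that $\emptyset\times U\to X\times U$ is a cofibration, so you still need $\emptyset\times U$ to be cofibrant (automatic when the initial object is strict, as in the filter-product examples, but worth stating as an assumption or arguing separately). Second, in the essential surjectivity step the intermediate object of the replacement zigzag need not be fibrant--cofibrant in $\cM_\Phi$, so you should conclude via the homotopy category (a zigzag of weak equivalences between fibrant--cofibrant objects of a simplicial model category yields a simplicial homotopy equivalence) rather than reading the zigzag directly inside $\Ho_\infty(\cM_\Phi)$. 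Neither issue undermines the argument; both are refinements of points the paper's own proof does not address at all.
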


\begin{proof}
	By definition, $\Ho_\infty$ and $(-)_\Phi$ preserve the set of objects. This means the two Kan enriched categories $\Ho_\infty(\cM_\Phi)$ and $\Ho_\infty(\cM)_\Phi$ have the same set of objects. Moreover, by \cref{def:mapphi,cor:simplicial	model category,thm:filter quotient infinity category} they have the same mapping Kan complexes.  
\end{proof}

The comparison functor provides us with an effective way to compute homotopy categories of filter quotient model structures. Recall that for a simplicial model category $\cM$, the homotopy category $\Ho(\cM)$ is defined as the category with objects the fibrant-cofibrant objects and morphisms $\Hom_{\Ho(\cM)}(X,Y) = \pi_0(\Map_{\cM}(X,Y))$, for objects $X,Y$ in $\Ho(\cM)$. Homotopy categories of $\infty$-categories are defined analogously. We now have the following result.

\begin{proposition} \label{prop:homotopy category}
	Let $\cM$ be a simplicial model category and $\Phi$ a model filter with all elements cofibrant. Then we have an equivalence of categories $\Ho(\cM_\Phi) \simeq	\Ho(\cM)_\Phi$.
\end{proposition}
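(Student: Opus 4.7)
The plan is to reduce this to the previous theorem by applying connected components hom-wise. Both $\Ho(\cM_\Phi)$ and $\Ho(\cM)_\Phi$ should be identified with categories obtained from the Kan enriched categories $\Ho_\infty(\cM_\Phi)$ and $\Ho_\infty(\cM)_\Phi$ respectively by applying $\pi_0$ to each mapping Kan complex. Since the previous theorem already identifies these two Kan enriched categories, the result will follow functorially.

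More concretely, I would first check that the filter quotient $\Ho(\cM)_\Phi$ is well defined, i.e.~that $\Phi$ is still a filter of subterminal objects in the $1$-category $\Ho(\cM)$. This follows from the preceding lemma together with the fact that an object which is subterminal in $\Ho_\infty(\cM)$ is automatically subterminal after passing to $\pi_0$ hom-wise. Next, I would unpack both sides: $\Ho(\cM_\Phi)$ has fibrant-cofibrant objects of $\cM_\Phi$ as objects, and morphisms $\pi_0\,\Map_{\cM_\Phi}(X,Y)$, while by \cref{thm:filtered colimits} the filter quotient $\Ho(\cM)_\Phi$ has the same objects and morphisms given by $\colim_{U\in\Phi}\Hom_{\Ho(\cM)}(X\times U, Y\times U) = \colim_{U\in\Phi}\pi_0\,\Map_{\cM}(X\times U, Y\times U)$.

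The key computational step is the natural bijection
\[ \pi_0\,\Map_{\cM_\Phi}(X,Y) = \pi_0 \!\!\colim_{U\in\Phi}\! \Map_{\cM}(X\times U, Y\times U) \cong \colim_{U\in\Phi}\pi_0\,\Map_{\cM}(X\times U, Y\times U), \]
where the first equality uses \cref{def:mapphi} (via \cref{cor:simplicial model category}) and the isomorphism uses that $\pi_0\colon\sset^{Kan}\to\set$ commutes with filtered colimits (being represented by the compact object $\Delta^0$). This matches the hom-sets of the two sides, and the check is natural in $X$ and $Y$. For compatibility with composition, one uses that $\pi_0$ is symmetric monoidal with respect to Cartesian products, so the composition on $\Ho(\cM_\Phi)$ induced from the enriched composition of $\Map_{\cM_\Phi}$ agrees with the composition on $\Ho(\cM)_\Phi$ induced from the colimit description in \cref{thm:filtered colimits}.

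The only mild obstacle is bookkeeping: ensuring the objects on both sides truly match (fibrant-cofibrant objects of $\cM$ are also fibrant-cofibrant in $\cM_\Phi$ since $P_\Phi$ preserves fibrations and cofibrations) and that the induced functor $\Ho(\cM_\Phi) \to \Ho(\cM)_\Phi$ from the previous theorem is the identity on objects, fully faithful by the display above, and essentially surjective by construction. Alternatively, one can phrase the argument as applying the homotopy category functor $\Ho\colon\mathsf{Cat}_{\sset^{Kan}}\to\mathsf{Cat}$ to the equivalence $\Ho_\infty(\cM_\Phi)\simeq \Ho_\infty(\cM)_\Phi$ and observing that $\Ho$ commutes with the filter quotient construction for the same reason that $\pi_0$ commutes with filtered colimits.
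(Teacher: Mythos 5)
Your proposal is correct and follows essentially the same route as the paper: both arguments come down to the facts that the filter quotient is a hom-wise filtered colimit and that passing to the homotopy category (i.e.\ applying $\pi_0$ to mapping spaces, or equivalently applying the left adjoint $\Ho$) commutes with filtered colimits. Your hom-wise $\pi_0$ computation is just an unpacked version of the paper's one-line argument, which you in fact state verbatim as your ``alternative'' phrasing at the end.
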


\begin{proof}
 By construction, we have the identity $\Ho(\cM) = \Ho(\Ho_\infty(\cM))$. By \cite[Theorem 2.22]{rasekh2021filterquotient}, the filter quotient $\infty$-category $\Ho_\infty(\cM)_\Phi$ is defined via a filtered colimit. So, the result follows from the fact that $\Ho\colon\cat_\infty \to \cat$ is the left adjoint to the inclusion and hence commutes with colimits \cite[Proposition 1.2.3.1]{lurie2009htt}.
\end{proof}

\section{The Case of Filter Products} \label{sec:filter products}
We now focus on a specific class of filter quotients, which provide many examples of interest: filter products. 

\begin{definition}
	Let $I$ be a set. A \emph{filter of subsets of $I$} is a filter on the poset $PI$, the power set of $I$.
\end{definition}

\begin{definition} 
	Let $\C$ be a category with an initial object $\emptyset$ and terminal object $1$, $I$ be a set, and $\Phi$ be a filter of subsets of $I$. Let $\Fil(\Phi)$ be the filter of subterminal objects in $\prod_I \C$, defined as 
	\[\Fil(\Phi) = \{ (1)_{i \in J} \colon J \in \Phi \}  \]
\end{definition}

If $\emptyset$ and $1$ are isomorphic, then $\Fil(\Phi)$ contains only one object and is hence trivial. We will hence assume $ \emptyset \not\cong 1$, in which case the assignment $\Fil$ is evidently injective. We will abuse notation and denote the filter of subterminal objects in $\prod_I \C$ by $\Phi$ again. 
For the filter to be homotopically relevant the initial object needs to be fibrant. Here we will focus on the case of \emph{strict initial objects}, i.e.,~initial objects with no non-trivial morphisms into them. This will guarantee fibrancy (it lifts against all trivial cofibrations) and not being isomorphic to the terminal object (except in the trivial category $[0]$).

\begin{lemma}
	Let $\cM$ be a model category with strict initial object, $I$ be a set and $\Phi$ a filter of subsets of $I$. Then the induced filter on $\prod_I \cM$ is a model filter.
\end{lemma}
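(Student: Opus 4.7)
The plan is to verify all conditions defining a model filter on $\prod_I \cM$ by working coordinate-wise and exploiting the strict initiality of $\emptyset$ in each factor, together with the standard fact that the product model structure on $\prod_I \cM$ has coordinate-wise cofibrations, fibrations, and weak equivalences.

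First I would record the three consequences of strict initiality I will use repeatedly: for any object $X$ in $\cM$, the product $X \times \emptyset$ is initial (since it comes with a map to $\emptyset$, which forces it to be initial); the canonical map $\emptyset \to 1$ is a monomorphism (any parallel pair of maps into $\emptyset$ has domain $\emptyset$ and must agree); and $\emptyset$ is fibrant since it lifts against every trivial cofibration via strictness. Together these let me handle each coordinate at which $(1)_{i \in J}$ takes the value $\emptyset$.

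Next I would show that every generator $(1)_{i \in J}$ with $J \in \Phi$ is discrete and homotopically subterminal in $\prod_I \cM$. Fibrancy and discreteness are coordinate-wise: $1$ is always fibrant, $\emptyset$ is fibrant by the above, and the structure map to the product-terminal $(1)_{i \in I}$ is the identity on $J$-coordinates and $\emptyset \to 1$ elsewhere, both monomorphisms. For the diagonal, at coordinates $i \in J$ it is the identity of $1$, and at $i \notin J$ it is $\emptyset \to \emptyset \times \emptyset \cong \emptyset$ (using $\emptyset \times \emptyset \cong \emptyset$), again the identity. Both are weak equivalences, so the diagonal is a weak equivalence coordinate-wise.

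Then I would verify the three filter axioms for $\mathscr{F}\mathrm{il}(\Phi)$. Non-emptiness and upward closure follow directly from the corresponding properties of $\Phi$ under the order-preserving assignment $J \mapsto (1)_{i \in J}$, using that a map $(1)_{i \in J} \to (1)_{i \in J'}$ in $\prod_I \cM$ forces $J \subseteq J'$ coordinate-wise. For finite intersection, the categorical product satisfies $(1)_{i \in J_1} \times (1)_{i \in J_2} \cong (1)_{i \in J_1 \cap J_2}$ by strict initiality applied off $J_1 \cap J_2$, and $J_1 \cap J_2 \in \Phi$ gives the required lower bound.

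Finally I would check $\Phi$-product stability of cofibrations and weak equivalences. For a coordinate-wise cofibration (resp.~weak equivalence) $f$ in $\prod_I \cM$ and some $(1)_{i \in J} \in \mathscr{F}\mathrm{il}(\Phi)$, the product $f \times (1)_{i \in J}$ is $f_i$ at positions $i \in J$ and $\mathrm{id}\colon \emptyset \to \emptyset$ at the remaining positions, using that $X \times \emptyset \cong \emptyset$. Both are cofibrations (resp.~weak equivalences) in $\cM$, so the product is a cofibration (resp.~weak equivalence) in $\prod_I \cM$, establishing the stability condition.

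I do not expect a deep obstacle here; the only subtlety is bookkeeping at the coordinates $i \notin J$, where strict initiality (rather than mere initiality) is precisely what is needed to conclude $X \times \emptyset \cong \emptyset$, $\emptyset$ is fibrant, and $\emptyset \to 1$ is a mono. These three consequences carry the entire proof.
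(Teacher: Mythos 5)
Your proof is correct and follows essentially the same route as the paper's: verify discreteness, fibrancy, and the homotopical subterminality of each $(1)_{i\in J}$ coordinate-wise, and observe that $-\times (1)_{i\in J}$ acts as the identity on $J$-coordinates and collapses the remaining coordinates to $\emptyset$ (by strict initiality), so coordinate-wise cofibrations and weak equivalences are preserved. Your write-up is considerably more detailed than the paper's three-sentence argument — in particular in spelling out why $\emptyset \to 1$ is a mono and why the order on $\mathscr{F}\mathrm{il}(\Phi)$ matches inclusion of subsets — but the underlying idea is identical.
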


\begin{proof}
	By \cref{ex:terminal object}, the filter is homotopical. Moreover, it is discrete by construction. The cofibrations and weak equivalences are $\Phi$-product stable, as the product with $J \in \Phi$, simply corresponds to projection $\prod_I \cM \to \prod_J \cM$.
\end{proof}

\begin{definition} \label{def:filter product model category}
	Let $\cM$ be a model category with strict initial object, $I$ be a set, and $\Phi$ a filter of subsets of $I$. Then the \emph{filter product model category} is the model category $(\prod_I \cM)_\Phi$, where $\Phi$ is the model filter in $\prod_I \cM$.
\end{definition}

\begin{notation}
	The filter product is commonly denoted by $\prod_\Phi \cM$.
\end{notation}

\begin{definition} \label{def:piphi}
	Let $\cM$ be a model category with strict initial object, $I$ a set, and $\Phi$ a filter of subsets on $I$. Let $\pi_\Phi\colon \cM \to \prod_\Phi \cM$ be defined as the composition 
	\[\cM \xrightarrow{ \ \Delta \ } \prod_I \cM \xrightarrow{ \ P_\Phi \ } \prod_\Phi \cM,\]
	where $\Delta$ is the diagonal functor.
\end{definition}

We have the following fact about $\pi_\Phi$. 

\begin{lemma}
Let $\cM$ be a model category with strict initial object, $I$ a set, and $\Phi$ a filter of subsets on $I$. Then the functor $\pi_\Phi\colon \cM \to \prod_\Phi \cM$ preserves weak equivalences, (co)fibrations and finite (co)limits.
\end{lemma}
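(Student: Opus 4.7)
The plan is to factor $\pi_\Phi$ according to its definition and verify that each of the two composing functors, the diagonal $\Delta\colon \cM \to \prod_I \cM$ and the projection $P_\Phi\colon \prod_I\cM \to \prod_\Phi\cM$, preserves the three types of structure. Since all three preservation properties are closed under composition of functors, this will give the result.

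For the diagonal $\Delta$, I would first recall that the model structure on $\prod_I \cM$ is defined componentwise: a morphism $(f_i)_{i\in I}$ is a (co)fibration or weak equivalence in $\prod_I \cM$ if and only if each $f_i$ is such in $\cM$. Hence $\Delta$ sends (co)fibrations to (co)fibrations and weak equivalences to weak equivalences by inspection. Likewise, finite (co)limits in $\prod_I \cM$ are computed componentwise, so $\Delta$, being essentially a constant diagram in each coordinate (and using that in each coordinate the functor is the identity), commutes with finite (co)limits.

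For the projection $P_\Phi$, the preservation of (co)fibrations and weak equivalences follows directly from Remark 3.9(3) applied to each of the classes $\cF$, $\cC$, $\cW$ (these are $\Phi$-product stable since $\Phi$ is a model filter, using the lemma preceding Definition 5.5 for cofibrations and weak equivalences, and that fibrations are always $\Phi$-product stable by Lemma 3.7), and from the identification of the model structure classes on $\cM_\Phi$ given in \cref{thm:filter quotient model structure}. Preservation of finite (co)limits by $P_\Phi$ is \cref{thm:filter quotient projection}(1).

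Composing the two, $\pi_\Phi = P_\Phi \circ \Delta$ preserves weak equivalences, (co)fibrations and finite (co)limits. There is no real obstacle here; the statement is a direct consequence of results already established in the paper, and the proof amounts to just citing \cref{thm:filter quotient projection}, the definition of the product model structure, and the remark following \cref{thm:filter quotient model structure} (equivalently Remark 3.9(3)).
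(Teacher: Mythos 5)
Your proof is correct; the paper actually states this lemma without proof, and your argument --- factoring $\pi_\Phi = P_\Phi \circ \Delta$, noting that the model structure and finite (co)limits on $\prod_I \cM$ are componentwise so that $\Delta$ preserves everything, and then invoking \cref{thm:filter quotient projection}(1) together with \cref{rem:s phi}(3) (valid because the induced filter on $\prod_I \cM$ is a model filter) for $P_\Phi$ --- is exactly the intended one. No gaps.
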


Notice, the filter product construction will in fact preserve relevant properties of the original model structure. 

\begin{proposition} \label{prop:filter	product proper}
	Let $\cM$ be a model category with strict initial object, $I$ a set, and $\Phi$ a filter of subsets on $I$. If $\cM$ is (right or left) proper then so is $\prod_\Phi \cM$. 
\end{proposition}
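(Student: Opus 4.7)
The plan is to reduce to \cref{prop:right proper} and \cref{prop:left proper} applied to $\prod_I \cM$ with the induced model filter $\Phi$, using the fact that properness of $\cM$ lifts componentwise to $\prod_I \cM$.

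For the right proper case, fibrations, weak equivalences, and pullbacks in $\prod_I \cM$ are all computed componentwise, so right properness of $\cM$ immediately passes to right properness of $\prod_I \cM$. Invoking \cref{prop:right proper} with the model filter $\Phi \subset \prod_I \cM$ then gives that $\prod_\Phi \cM = (\prod_I \cM)_\Phi$ is right proper.

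For the left proper case, the same componentwise reasoning (now for cofibrations, weak equivalences, and pushouts) shows $\prod_I \cM$ is left proper. To invoke \cref{prop:left proper}, I additionally need to verify that for every $U \in \Phi$, the functor $U \times -$ on $\prod_I \cM$ preserves pushouts. By definition, such a $U$ has the form $(U_i)_{i \in I}$ with $U_i = 1$ for $i \in J$ and $U_i = \emptyset$ for $i \notin J$, where $J \in \Phi$. Since products and pushouts in $\prod_I \cM$ are both computed componentwise, it suffices to check that each $U_i \times - \colon \cM \to \cM$ preserves pushouts. For $i \in J$ this is the identity functor, so there is nothing to check. For $i \notin J$, the strictness of the initial object means that the projection $\emptyset \times X \to \emptyset$ is an isomorphism for every $X$, hence $U_i \times -$ is the constant functor with value $\emptyset$, which trivially preserves pushouts (any constant diagram at $\emptyset$ has pushout $\emptyset$). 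Thus \cref{prop:left proper} applies and $\prod_\Phi \cM$ is left proper.

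The argument is essentially a routine unpacking; the one place where care is needed, and where the strict-initial-object hypothesis plays a non-trivial role, is in verifying the pushout-preservation condition in \cref{prop:left proper}. Without strictness, $\emptyset \times X$ need not be initial and $U \times -$ would not in general preserve pushouts, so the hypothesis of \cref{prop:left proper} could fail. (Alternatively, the proper case can be deduced by combining both halves or by applying \cref{cor:proper} directly once the pushout-preservation has been checked.)
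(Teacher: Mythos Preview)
Your proof is correct and follows essentially the same route as the paper: reduce right properness to \cref{prop:right proper} and left properness to \cref{prop:left proper}, the only nontrivial point being the verification that $U \times -$ preserves pushouts. The paper phrases this last check by noting that $U \times -$ corresponds to the projection $\prod_I \cM \to \prod_U \cM$, while you verify it componentwise using strictness of the initial object; these are two ways of saying the same thing, and your version is slightly more explicit.
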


\begin{proof}
	Let us assume $\cM$ is (right or left) proper. We want to show that $\prod_\Phi \cM$ is (right or left) proper. The case for right properness is immediate (\cref{prop:right proper}). The case for left	properness follows from the fact that $U \times -$ corresponds to the projection functor $\prod_I \cM \to \prod_U \cM$, which evidently preserves pushouts, and \cref{prop:left proper}. The case for properness follows from combining the last two steps.
\end{proof}

\begin{proposition} \label{prop:filter product enriched}
	Let $\cM$ be a $\V$-compactly enriched model category with strict initial object, $I$ a set, and $\Phi$ a filter of subsets on $I$. If $\V$ is quotient compatible, then $\Phi$ is a $\V$-enriched model filter, and hence $\prod_\Phi \cM$ is a $\V$-compactly enriched model category.
\end{proposition}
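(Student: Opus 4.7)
The plan is to verify that the induced filter on $\prod_I\cM$ satisfies the single extra condition from \cref{def:simplicial model filter} beyond being a model filter, and then invoke \cref{thm:enriched filter quotient}. The preceding lemma already establishes that $\Phi$ is a model filter on $\prod_I\cM$, and since $\cM$ is $\V$-compactly enriched, the product $\prod_I\cM$ inherits a componentwise $\V$-compact enrichment: mapping objects, tensors, cotensors, (co)fibrations, weak equivalences, and limits and colimits are all computed coordinatewise. So the remaining task is to check, for $X=(X_i)_{i\in I}\in\prod_I\cM$, a compact $K\in\V$, and $U\in\Phi$ of the form $U_i=1$ for $i\in J$ and $U_i=\emptyset$ for $i\notin J$ (with $J$ an element of the underlying filter of subsets), the identity $(X\otimes K)\times U = (X\times U)\otimes K$.

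Both sides are computed coordinatewise. For $i\in J$ each side gives $X_i\otimes K$, because $X_i\times 1=X_i$. For $i\notin J$ one needs $(X_i\otimes K)\times\emptyset = \emptyset\otimes K$. The left side is $\emptyset$ because $\emptyset$ is strict initial in $\cM$, so for any object $Y$ the projection $Y\times\emptyset\to\emptyset$ is an isomorphism. The right side is $\emptyset$ because $-\otimes K$ is a left adjoint (with right adjoint $(-)^K$), so it preserves initial objects. The two sides therefore agree, and $\Phi$ is a $\V$-enriched model filter in the sense of \cref{def:simplicial model filter}.

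Once this is in hand, \cref{thm:enriched filter quotient}, applied to the quotient compatible $\V$, immediately yields the $\V$-compactly enriched model structure on $(\prod_I\cM)_\Phi = \prod_\Phi\cM$. I do not anticipate a genuine obstacle here: the only nontrivial point is the coordinate-by-coordinate verification of the tensor identity, and the two special cases reduce to the familiar facts that strict initial objects absorb products and that tensors, as left adjoints, preserve initial objects. The rest of the work has been done upstream in the general enriched theorem.
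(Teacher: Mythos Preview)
Your argument is correct and follows the same route as the paper: verify the tensor compatibility of \cref{def:simplicial model filter} and then invoke \cref{thm:enriched filter quotient}. The paper's proof is terser, observing only that $-\times U$ is ``simply the projection map'' and hence ``evidently commutes'' with $-\otimes K$; your coordinatewise verification (using strictness of $\emptyset$ and the left-adjointness of $-\otimes K$) spells out exactly what makes that evident.
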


\begin{proof}
 We need to check that $\Phi$ is a $\V$-enriched model filter, the result will then follow from \cref{thm:enriched filter quotient}. Let $K$ be a compact object in $\V$ and $U$ an object in $\Phi$. Then $ - \times U\colon \prod_I \cM \to \prod_U \cM$ is simply the projection map, which evidently commutes with $- \otimes K\colon \cM \to \cM$. Hence, the filter quotient, in this case the filter product $\prod_\Phi \cM$, is $\V$-enriched.  
\end{proof}

We can in particular apply this to simplicial enrichment. 

\begin{corollary} \label{cor:filter product simplicial}
	 Let $\cM$ be a simplicial model category with strict initial object, $I$ a set, and $\Phi$ a filter on $I$. Then $\Phi$ is a simplicial model filter and $\prod_\Phi \cM$ is a simplicial model category. 
\end{corollary}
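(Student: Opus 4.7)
The plan is to deduce this as a direct specialization of \cref{prop:filter product enriched} to $\V = \sset^{Kan}$. Thus the entire task reduces to verifying that the Kan model structure on simplicial sets satisfies \cref{def:quotient compatible}, i.e.\ is quotient compatible.

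First I would check the structural hypotheses on $\sset^{Kan}$. Simplicial sets form a presheaf category, hence have all (in particular filtered) colimits, and the Cartesian product $- \times -$ commutes with filtered colimits, since colimits in presheaves are computed pointwise and filtered colimits commute with finite limits in $\set$. The monoidal unit of the Cartesian structure is the terminal object $\Delta^0$, which is representable and hence compact. This covers the first three clauses of \cref{def:quotient compatible}.

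Next I would invoke \cref{lemma:cofibrant generation} to obtain the fourth clause. The Kan model structure is cofibrantly generated with generating cofibrations $\partial \Delta^n \hookrightarrow \Delta^n$ and generating trivial cofibrations $\Lambda^n_k \hookrightarrow \Delta^n$, all of whose (co)domains are finite (hence compact) simplicial sets. Hence \cref{lemma:cofibrant generation} applies and yields that (trivial) fibrations are closed under filtered colimits. Combined with the previous step, this shows that $\sset^{Kan}$ is quotient compatible.

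With this in hand, \cref{prop:filter product enriched} applied with $\V = \sset^{Kan}$ immediately yields that $\Phi$ is a simplicial (i.e.\ $\sset^{Kan}$-enriched) model filter on $\prod_I \cM$ and that $\prod_\Phi \cM$ inherits the structure of a $\sset^{Kan}$-compactly enriched model category, which by \cref{def:simplicial model structure} is precisely what it means to be a simplicial model category. I do not anticipate any substantive obstacle: all the technical work has been absorbed into \cref{lemma:cofibrant generation,thm:enriched filter quotient,prop:filter product enriched}, and the only mild care needed is to confirm compactness of the relevant simplicial sets, which is standard.
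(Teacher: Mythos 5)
Your proposal is correct and follows exactly the route the paper intends: the corollary is stated without proof as a direct specialization of \cref{prop:filter product enriched} to $\V = \sset^{Kan}$, with quotient compatibility of the Kan model structure verified just as you do (presheaf category, products commute with filtered colimits, compact representable unit $\Delta^0$, and \cref{lemma:cofibrant generation} applied to the boundary and horn inclusions). Your verification matches the argument the paper already recorded in the proof of \cref{cor:simplicial model category}, so there is nothing to add.
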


\begin{corollary} \label{cor:filter product Joyal}
	 Let $\cM$ be a model category enriched over the Joyal model structure with strict initial object, $I$ a set, and $\Phi$ a filter on $I$. Then $\Phi$ is a simplicial model filter and $\prod_\Phi \cM$ is a model category enriched over the Joyal model structure. 
\end{corollary}

We can now apply the result to Quillen adjunctions and equivalences.

\begin{corollary} \label{cor:quillen adjunction}
	Let $\cM, \cN$ be model categories with strict initial objects, $I$ a set, and $\Phi$ a filter of subsets on $I$. Let  
	\[
		\begin{tikzcd}
			\cM \arrow[r, "F", shift left = 1.8] \arrow[r, "\bot", leftarrow, shift right = 1.8, "G"'] & \cN 
		\end{tikzcd}
	\]
	be a Quillen	adjunction, such that $F$ preserves	the terminal object and $G$ preserves the initial object. Then this induces a Quillen adjunction 
	\[
		\begin{tikzcd}
			\prod_\Phi \cM \arrow[r, "\prod_I F", shift left = 1.8] \arrow[r, "\bot", leftarrow, shift right = 1.8, "\prod_I G"'] & \prod_\Phi \cN 
		\end{tikzcd}.
	\]
	Moreover, if $(F,G)$ is a homotopical (co)localization, Quillen equivalence, or enriched, then so is $(\prod_I F, \prod_I G)$.
\end{corollary}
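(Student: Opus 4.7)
The plan is to reduce this corollary to \cref{prop:quillen adjunction,prop:quillen adjunction enriched} in two steps: first lift $(F,G)$ to the product model categories $\prod_I \cM$ and $\prod_I \cN$, then verify the compatibility hypotheses needed to descend to the filter quotients.

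For the first step, I would observe that the model structure on a product of model categories is componentwise, so the coordinatewise functors $\prod_I F$ and $\prod_I G$ assemble to a Quillen adjunction between $\prod_I \cM$ and $\prod_I \cN$ with no extra work. The initial and terminal objects of $\prod_I \cM$ are the constant $I$-tuples at $\emptyset$ and at $1$, and both remain strict initial in the product.

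For the second step, I would unpack the three hypotheses of \cref{prop:quillen adjunction} relative to the induced filter $\mathscr{F}\mathrm{il}(\Phi)$ from \cref{def:filter product model category}. An element $U \in \mathscr{F}\mathrm{il}(\Phi)$ has coordinates $U_i = 1$ for $i \in J$ and $U_i = \emptyset$ otherwise, for some $J \in \Phi$. Since $F$ preserves the terminal object by hypothesis and preserves the initial object as a left adjoint, $(\prod_I F)(U)$ has exactly the same shape and hence lies in $\mathscr{F}\mathrm{il}(\Phi) \subset \prod_I \cN$; dually, $G$ preserves the terminal object as a right adjoint and the initial object by hypothesis, so $\prod_I G$ restricts to $\mathscr{F}\mathrm{il}(\Phi) \to \mathscr{F}\mathrm{il}(\Phi)$. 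The remaining condition $(\prod_I F)(X \times U) = (\prod_I F)(X) \times (\prod_I F)(U)$ is checked coordinatewise: for $i \in J$ it reduces to $F(X_i \times 1) = F(X_i) \times 1$, using that $F$ preserves the terminal object; for $i \notin J$ both sides become $\emptyset$, using that $F(\emptyset) = \emptyset$ and that $X \times \emptyset = \emptyset$ whenever the initial object is strict. With these verifications, \cref{prop:quillen adjunction} produces the induced Quillen adjunction between $\prod_\Phi \cM$ and $\prod_\Phi \cN$, and its second half immediately yields the homotopical (co)localization and Quillen equivalence clauses of the ``moreover''.

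For the enriched clause, I would invoke \cref{prop:filter product enriched} to recognize that $\mathscr{F}\mathrm{il}(\Phi)$ is automatically a $\V$-enriched model filter on both $\prod_I \cM$ and $\prod_I \cN$, and then apply \cref{prop:quillen adjunction enriched} to upgrade the adjunction produced above to an enriched Quillen adjunction.

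The main obstacle is the product compatibility $(\prod_I F)(X \times U) = (\prod_I F)(X) \times (\prod_I F)(U)$ at the coordinates where $U_i = \emptyset$. This is the only place where the strict initial object assumption (in both $\cM$ and $\cN$) plays a genuine role: it collapses $X_i \times \emptyset$ and $F(X_i) \times \emptyset$ to $\emptyset$, so that the identity $F(\emptyset) = \emptyset$ (which is free for a left adjoint) does the rest. Everything else is a direct application of the machinery already in place.
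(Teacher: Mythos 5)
Your proposal is correct and follows essentially the same route as the paper: reduce to \cref{prop:quillen adjunction} (and \cref{prop:quillen adjunction enriched} for the enriched clause) by noting that elements of the induced filter are tuples of initial and terminal objects preserved by $F$ and $G$, and that $- \times U$ acts coordinatewise as a projection commuting with $\prod_I F$. Your coordinatewise verification of $(\prod_I F)(X \times U) = (\prod_I F)(X) \times (\prod_I F)(U)$, including the role of strict initial objects at coordinates outside $J$, is just a more explicit unpacking of the paper's one-line argument.
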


\begin{proof}
 We need to check that the conditions of \cref{prop:quillen adjunction} are satisfied. Elements in $\Phi$ are by definition tuples of initial and terminal objects, which, by assumption, are preserved by $F$ and $G$. Moreover, $-\times U$ is simply a projection functor, which evidently commutes with $F$. Hence all conditions are satisfied and we are done.
\end{proof}

Finally, let us look at some examples. Recall that for a set $I$ and element $i$	in $I$, the \emph{principal filter} $\Phi_i$ at $i$ is given by $\{J \subseteq I \mid i \in J\}$. Note that $\Fil(\Phi_i)$ is a principal filter on $\prod_I \cM$, in the sense of \cref{ex:principalfilter}. The computation in the aforementioned example immediately implies the following.

\begin{example}
	Let $\cM$ be a model category with strict initial object, $I$ a set, and $i$ an element in $I$. Let $\Phi_i$ be the principal filter at $i$. Then the evaluation functor $\ev_i\colon \prod_{\Phi_i} \cM \to \cM$ is an equivalence.
\end{example}

\begin{example}
 Let $\cM$ be a model category with strict initial object, $I$ a set, and $J$ a subset of $I$. Let $\Phi_J$ be the filter consisting of subsets of $I$ that contain $J$. Then the evaluation functor $\ev_J\colon \prod_{\Phi_J} \cM \to \prod_J\cM$ is an equivalence.
\end{example}

\section{Examples} \label{sec:examples}
In this final section, we focus on examples of filter quotients. In the first two examples, we see that neither being cofibrantly generated nor combinatorial is in general preserved under filter products.

\begin{example} \label{ex:filter product sset}
	Let $\sset^{Kan}$ denote the category of simplicial sets with the Kan model structure, which is a proper simplicial model category \cite[Theorem II.3.1]{quillen1967modelcats}, that is also combinatorial \cite[Example 11.1.6]{hirschhorn2003modelcategories}. Let $\bN$ denote the set of natural numbers and $\cF$ the Fr{\'e}chet filter given by cofinite subsets of $\bN$. In this case the initial object in $\sset$ is indeed strict. Hence, the filter product $\prod_\cF \sset$ is a model category with the following properties: it is proper (\cref{prop:filter product proper}), simplicial (\cref{cor:filter product simplicial}), and Cartesian closed (\cref{prop:cartesian closure}).
	
	On the other hand, the category $\prod_\cF \sset$ does not have countable coproducts. Indeed, following the argument in \cite[Example D.5.1.7]{johnstone2002elephanti}, if $\prod_\cF \sset$ has infinite coproducts, then the object $(\bN)_{n \in \bN}$ has to be the countable coproduct of the terminal object, which is shown in that example not to be the case. Hence the model structure is neither combinatorial nor cofibrantly generated (\cref{def:cofibrantly generated combinatorial}).

	Finally, note that, while $\prod_\cF \sset$ is	not cofibrantly generated, it is still \emph{set-wise cofibrantly generated}, in the sense of \cite[Definition 2.9]{raptisrosicky2018smallpresentations}, by the set of (trivial) cofibrations 
	{\small
	\[ I =   \{(\partial \Delta[a_n])_{n \in \bN} \to (\Delta[a_n])_{n \in \bN} \mid (a_n)_{n \in \bN} \in \Hom_{\prod_\cF \sset}((1)_{n \in \bN},(\bN)_{n \in \bN}) \},\]
	\[ J = \{ ( \Lambda[a_n]_{i_n})_{n \in \bN} \to (\Delta[a_n])_{n \in \bN} \mid (a_n)_{n \in \bN}, (i_n)_{n \in \bN} \in \Hom_{\prod_\cF \sset}((1)_{n \in \bN},(\bN)_{n \in \bN}), (0)_{n \in \bN} \leq (i_n)_{n \in \bN} \leq (a_n)_{n \in \bN} \}.\]}
	Here set-wise cofibrantly generated means that a morphism is a (trivial) fibration if and only if it has the right lifting property with respect to all morphisms in $J$ (resp. $I$). While this condition is too weak for most aforementioned applications, such as the small object argument and the construction of new model structures, it can still be of independent interest.
\end{example}	

\begin{example} \label{ex:filter product top}
	Let $\Top^{Serre}$ denote the category of topological spaces with the Serre model structure, which is a proper simplicial model category \cite[Theorem II.3.1]{quillen1967modelcats}, that is cofibrantly generated \cite[Example 11.1.8]{hirschhorn2003modelcategories}, but not combinatorial \cite[Example 1.24 (7)]{adamekrosicky1994locallypresentable}. Again the empty topological space is strict, and so with $\bN$ and $\cF$ as in the previous example, $\prod_\cF \Top$ is a proper (\cref{prop:filter product proper}) and simplicial (\cref{cor:filter product simplicial}) model category. On the other hand, we can similarly see that $\prod_\cF \Top$ does not have countable coproducts and hence is not cofibrantly generated (\cref{def:cofibrantly generated combinatorial}).

	Again, similar to \cref{ex:filter product sset}, the model structure is set-wise cofibrantly generated by the set of (trivial) cofibrations
	\[ I =   \{(S^{a_n})_{n \in \bN} \to (D^{a_n+1})_{n \in \bN} \mid (a_n)_{n \in \bN} \in \Hom_{\prod_\cF \sset}((1)_{n \in \bN},(\bN)_{n \in \bN}) \},\]
	\[ J = \{ ( D^{a_n})_{n \in \bN} \to (D^{a_n} \times I)_{n \in \bN} \mid (a_n)_{n \in \bN} \in \Hom_{\prod_\cF \sset}((1)_{n \in \bN},(\bN)_{n \in \bN}) \}.\]
\end{example}	

For the next example, recall that an \emph{ultrafilter} of subsets of $I$ is a filter that is maximal as a subset of $PI$ with respect to inclusion. For a given element $i$ in $I$, the principal filter $\Phi_i = \{S \subseteq I \mid i \in S\} \subseteq PI$ is in particular an ultrafilter. All other ultrafilters are called \emph{non-principal}.

\begin{example} \label{ex:ultrafilter product sset}
	Let $\sset^{Kan}$ denote the category of simplicial sets with the Kan model structure. Let $\bN$ denote the set of natural numbers and let $\U$ be a non-principal ultrafilter on $\bN$. We then obtain a filter product model category $\prod_\U \sset$, which is simplicial. Notice, the global section functor $\Map((\Delta[0])_\bN,-)\colon \prod_\U \sset \to \sset$ is faithful, without being an equivalence, meaning the terminal object is a \emph{generator}, in the sense of \cite[Section VI.1]{maclanemoerdijk1994topos}. Indeed, it suffices to observe that the induced functor on homotopy categories 
	\[\Ho(\prod_\U \sset) \cong \prod_\U \Ho(\sset) \xrightarrow{ \pi_0\Map((\Delta[0])_\bN,-) } \Ho(\sset)\]
	is faithful. The first functor is just an equivalence, by \cref{prop:homotopy category}, and the second is faithful, by \cite[Example 9.45]{johnstone1977topos}.
\end{example}

Let us generalize these examples in two directions. 

\begin{definition}[\cite{rezk2010toposes}] \label{def:model topos}
	A \emph{model topos} is a left-exact Bousfield localization of the projective model structure on $\Fun(\C^{op},\sset)$, where $\C$ is a small category and $\sset$ has the Kan model structure.
\end{definition}

 The underlying category is a presheaf category, meaning the initial object is strict.

\begin{example}
  Let $\cG$ be a \emph{model topos}, $I$ a set and $\Phi$ a filter of subsets of $I$. Based on \cref{def:model topos}, we obtain a filter product model category $\prod_\Phi \cG$. 
\end{example}

\begin{example}
 Let $X$ be a topological space and $x$ a point in $X$. Let $\Phi_x$ be the filter of open subsets of $X$	containing $x$. Let $\Shv(X)$ be the model topos given as category of simplicial sheaves on the category $Open_X$. Then $\Shv(X)_{\Phi_x}$ has an induced model structure. 
\end{example}

Let us move on from examples of model categories modeling homotopy types (such as Kan complexes), to examples of model categories modeling $\infty$-categories.

\begin{example} \label{ex:joyal cosmos}
	Let $\sset^{Joy}$ denote the category of simplicial sets with the Joyal model structure, $I$ a set and $\cF$ a filter of subsets of $I$. Then, by \cref{cor:model category enriched joyal}, the filter product $\prod_\cF \sset^{Joy}$ is enriched over the Joyal model structure with all objects cofibrant. This means the full subcategory of fibrant objects is an $\infty$-cosmos \cite[Lemma 2.2.1]{riehlverity2017inftycosmos}. This in particular means there is a well-defined notion of \emph{Cartesian fibration}  \cite[Theorem 4.1.10]{riehlverity2017inftycosmos} that satisfies an internal version of the Yoneda lemma \cite[Theorem 6.0.1]{riehlverity2017inftycosmos}.

	If, in addition, $\cF$ is a non-principal ultrafilter, then we can repeat the same argument used in \cref{ex:ultrafilter product sset} to deduce that $\Hom((\Delta^0)_I,-)\colon \prod_\cF \sset \to \sset$ is faithful, meaning the terminal object is a generator. However, it is still the case that the category of fibrant objects is not locally presentable or accessible. Indeed, as the discrete simplicial set $\bN$ is a quasi-category, by \cref{thm:filter quotient model structure}, the object $(\bN)_{I}$ in $\prod_\cF \sset^{Joy}$ is also fibrant. Hence, following the same argument as in \cref{ex:filter product sset}, $\prod_\cF \sset^{Joy}$ cannot have countable coproducts.
	
	Hence, we have an example of an $\infty$-cosmos that is not locally presentable or accessible, in the sense of \cite{bourkelack2023accessiblecosmos}, that is still generated by the terminal object. This means we are generating a whole new class of $\infty$-cosmoi, not yet considered in the literature.
\end{example}

\begin{remark}
	Let $I$ be a set and $\cF$	a filter of subsets of $I$. The identity functor $\id\colon \sset^{Kan} \to \sset^{Joy}$ is right Quillen and homotopically fully faithful, and evidently preserves the initial object. Moreover, the left adjoint, which is also the identity, preserves the terminal object. This means, by \cref{cor:quillen adjunction}, we have an induced homotopically fully faithful right adjoint $\id\colon \prod_\cF \sset^{Kan} \to \prod_\cF \sset^{Joy}$. Intuitively, the image consists of the ``discrete $\infty$-categories'' or ``$\infty$-groupoids''.
\end{remark}

In these examples we focused on quasi-categories, however, the filter product construction is quite stable under change of models of $(\infty,1)$-categories. 

\begin{example}
	Let 
 \[
		\begin{tikzcd}
			\sset \arrow[r, "p_1^*", shift left = 1.8] \arrow[r, "\bot", leftarrow, shift right = 1.8, "i_1^*"'] & \sset^{\DD^{op}} \arrow[r, "t_!", shift left = 1.8] \arrow[r, "\bot", leftarrow, shift right = 1.8, "t^!"']  & \sset 
		\end{tikzcd} 
	\]
	denote the Quillen equivalences between the Joyal model structure on $\sset$ and the Rezk model structure on $\sset^{\DD^{op}}$ \cite{joyaltierney2007qcatvssegal}. Notice, both $p_1^*$ and $t_!$ preserve the terminal object and $i_1^*$ and $t^!$ preserve the initial object.
	
	Now, let $I$ be a set and $\cF$ be a filter of subsets. Then \cref{cor:quillen adjunction} implies that we have a diagram of Quillen equivalences
 \[
		\begin{tikzcd}
			\prod_\cF \sset \arrow[r, "p_1^*", shift left = 1.8] \arrow[r, "\bot", leftarrow, shift right = 1.8, "i_1^*"'] & \prod_\cF \sset^{\DD^{op}} \arrow[r, "t_!", shift left = 1.8] \arrow[r, "\bot", leftarrow, shift right = 1.8, "t^!"']  & \prod_\cF \sset 
		\end{tikzcd} 
	\]
	which we can think of as the filter product analogue of a change of model of $(\infty,1)$-categories between quasi-categories and complete Segal spaces. 
	
	We already observed above that $\prod_\cF \sset$ is an $\infty$-cosmos (\cref{ex:joyal cosmos}). We can similarly use the fact that the Rezk model structure gives us an $\infty$-cosmos \cite[Example 1.2.24]{riehlverity2022elements}. Now, by \cite[Corollary E.1.2]{riehlverity2022elements} this Quillen equivalence gives us a cosmological biequivalence between these $\infty$-cosmoi.
\end{example}

Let us finally observe a situation where filter quotients do not result in new examples.

\begin{example}
	Let $\cM$ be a pointed model category, meaning the unique map from the initial object to the terminal object is a weak equivalence. Then, following \cref{ex:pointed filter quotient}, every subterminal object is equivalent to the terminal object. Hence, every filter quotient is trivial.
\end{example}

\bibliographystyle{alpha}
\bibliography{main}

\end{document}